\tikzset{node distance=2cm, auto}
\newtheorem{theorem}{Theorem}[section]
\newtheorem{proposition}[theorem]{Proposition}
\newtheorem{definition}[theorem]{Definition}
\newtheorem{corollary}[theorem]{Corollary}
\def\F{\mathcal{F}}
\def\G{\mathcal{G}}
\def\H{\mathcal{H}}
\def\I{\mathcal{I}}
\def\K{\mathcal{K}}
\def\W{\mathcal{W}}
\def\L{\mathcal{L}}
\def\R{\mathcal{R}}
\def\S{\mathcal{S}}
\def\RN{\mathcal{RN}}
\def\AS{\mathcal{A}}
\def\C{\mathbb{C}}
\def\N{\mathbb{N}}
\def\co{\mathrm{co}}
\def\aco{\mathrm{aco}}
\def\lin{\mathrm{lin}}
\def\rank{\mathrm{rank}}
\begin{document}

\title[On holomorphic mappings with compact type range]{On holomorphic mappings with compact type range}

\author[A. Jim{\'e}nez-Vargas]{A. Jim{\'e}nez-Vargas}
\address[A. Jim{\'e}nez-Vargas]{Departamento de Matem{\'a}ticas, Universidad de Almer{\'i}a, 04120, Almer{\'i}a, Spain}
\email{ajimenez@ual.es}

\author[D. Ruiz-Casternado]{D. Ruiz-Casternado}
\address[D. Ruiz-Casternado]{Departamento de Matem{\'a}ticas, Universidad de Almer{\'i}a, 04120, Almer{\'i}a, Spain}
\email{davidrc3005@gmail.com}

\author[J. M. Sepulcre]{J. M. Sepulcre}
\address[J. M. Sepulcre]{Departamento de Matem\'aticas, Universidad de Alicante, 03080 Alicante, Spain}
\email{JM.Sepulcre@ua.es}

\date{\today}

\subjclass[2020]{46E10, 46E15, 46G20}
%46E10 Topological linear spaces of continuous, differentiable or analytic functions
%46E15 Banach spaces of continuous, differentiable or analytic functions
%46G20 Infinite-dimensional holomorphy
\keywords{Vector-valued holomorphic mapping, operator ideal, linearization, factorization theorems, Schauder's theorem.}

\thanks{This paper was finished while the first author was visiting the Department of Mathematics at Alicante University (Spain). He thanks this Department for its kind hospitality. The first author was partially supported by project UAL-FEDER grant UAL2020-FQM-B1858, by Junta de Andaluc\'{\i}a grants P20$\_$00255 and FQM194, and by Ministerio de Ciencia e Innovaci\'on grant PID2021-122126NB-C31. The third author was also supported by PGC2018-097960-B-C22 (MCIU/AEI/ERDF, UE)}

\begin{abstract}
Using Mujica's linearization theorem, we extend to the holomorphic setting some classical characterizations of compact (weakly compact, Rosenthal, Asplund) linear operators between Banach spaces such as the Schauder, Gantmacher and Gantmacher--Nakamura theorems and the Davis--Figiel--Johnson--Pe\l czynski, Rosenthal and Asplund factorization theorems.
\end{abstract}
\maketitle

\section*{Introduction}\label{section 0}

S. Dineen \cite[p. 417]{Din-81} pointed out that a remarkable result due to K. F. Ng \cite{Ng-71} provides a Banach space $\G^\infty(U)$ whose dual is isometrically isomorphic to the Banach space $\H^\infty(U)$ of all bounded holomorphic complex-valued functions on an open subset $U$ of an arbitrary complex Banach space $E$, endowed with the supremum norm.

J. Mujica presented in \cite[Theorem 2.1]{Muj-91} a linearization theorem for bounded holomorphic mappings which is a refinement of Dineen's idea. Specifically, he proved that there exist a Banach space $\G^\infty(U)$ and a bounded holomorphic mapping $\delta_U\colon U\to\G^\infty(U)$ satisfying the following universal property: for each complex Banach space $F$ and each bounded holomorphic mapping $f\colon U\to F$, there exists a unique bounded linear operator $T_f\colon\G^\infty(U)\to F$ such that $T_f\circ\delta_U=f$. R. Ryan \cite{Rya-80} had previously obtained a polynomial version of Mujica's theorem by using a tensor product approach. In \cite{GalGarMae-92}, P. Galindo, D. Garc\'ia and M. Maestre established a linearization theorem for holomorphic mappings of bounded type. Moreover, J. Mujica and L. Nachbin also obtained in \cite{Muj-92} a linearization theorem for holomorphic functions between locally convex spaces.

Applying Ng's theorem, Mujica defined the space $\G^\infty(U)$ as the norm-closed linear subspace of $\H^\infty(U)^*$ formed by those functionals which are $\tau_c$-continuous when restricted to the closed unit ball of $\H^\infty(U)$, where $\tau_c$ denotes the compact-open topology on $\H^\infty(U)$. The correspondence $f\mapsto T_f$ is an isometric isomorphism between the space $\H^\infty(U,F)$ of all bounded holomorphic mappings from $U$ into $F$ with the supremum norm, and the space $\L(G^\infty(U);F)$ of all continuous linear operators from $\G^\infty(U)$ into $F$ with the operator norm. The mapping $\delta_U\colon U\to\G^\infty(U)$ is defined by $\delta_U(x)=\delta(x)$, where $\delta(x)$ is the evaluation functional at $x$ defined on $\H^\infty(U)$. The proof of Theorem 2.1 in \cite{Muj-91} shows that the closed unit ball of $\G^\infty(U)$ coincides with the norm-closed absolutely convex hull of $\delta_U(U)$. In particular, $\G^\infty(U)$ is the norm-closed linear hull of $\delta_U(U)$ in $\H^\infty(U)^*$.

In Section 3 of his paper \cite{Muj-91}, Mujica briefly dealt with holomorphic mappings that have compact type range. To be more precise, he showed that a mapping $f\in\H^\infty(U,F)$ has relatively compact range (respectively, relatively weakly compact range, finite rank) if and only if its linearization $T_f\in\L(G^\infty(U);F)$ is a compact (respectively, weakly compact, bounded finite-rank) operator. Our purpose in this paper is to complete the study initiated by Mujica.

From a local point of view, the properties of compactness, weak compactness, Rosenthal and Asplund for holomorphic mappings from $U$ into $F$ were addressed by R. M. Aron and M. Schottenloher \cite{AroSch-76}, R. Ryan \cite{Rya-88}, M. Lindstr\"om \cite{Lin-89} and N. Robertson \cite{Rob-92}, respectively. Let us recall that a mapping $f\colon U\to F$ is said to be \textit{locally compact (respectively, locally weakly compact, locally Rosenthal, locally Asplund)} if every point $x\in U$ has a neighborhood $V_x\subseteq U$ such that $f(V_x)$ is relatively compact (respectively, relatively weakly compact, Rosenthal, Asplund) in $F$. Clearly, every mapping $f\colon U\to F$ having relatively compact range (respectively, relatively weakly compact range, Rosenthal range, Asplund range) is locally compact (respectively, locally weakly compact, locally Rosenthal, locally Asplund), however, the converse is not true in general for mappings $f\in\H^\infty(U,F)$ (see Example 3.2 in \cite{Muj-91} for the first two types of mappings).

%We will say that a mapping $f\colon U\to F$ is said to be \textit{compact (respectively, weakly compact, Rosenthal, Asplund)} if $f(U)$ is relatively compact (respectively, relatively weakly compact, Rosenthal, Asplund) in $F$.

We have organized this note as follows. In Section \ref{section 1}, we recall Mujica's linearization theorem and some of its consequences that will be needed to establish our results. With the aid of the notion of transpose mapping of a bounded holomorphic mapping, Section \ref{section 2} is devoted to the analogues for bounded holomorphic mappings of the results due to Schauder, Gantmacher and Nakamura on the compactness and weak compactness of the adjoint of a bounded linear operator between Banach spaces. We suggest the reader to compare our results with Schauder and Gantmacher type theorems for holomorphic mappings of bounded type established by M. Gonz\'alez and J. M. Guti\'errez \cite{GonGut-93,GonGut-96,GonGut-97} and R. Ryan \cite{Rya-80,Rya-88}.

As a main result, we show that some factorization theorems for bounded linear operators between Banach spaces can be extended to the holomorphic setting. This is the case of the Davis--Figiel--Johnson--Pe\l czynski factorization theorem \cite{dfjp} which states that every weakly compact operator factors through a reflexive Banach space, the Rosenthal factorization theorem (see \cite{Alv-88}) which asserts that every Rosenthal operator factors through a Banach space not containing $\ell_1$, and the Asplund factorization theorem which assures that every Asplund operator factors through an Asplund space (see \cite[Theorem 5.3.5]{Bou-83}).

We refer to the book of R. E. Megginson \cite{Meg-98} for a complete study on weak topologies and linear operators on Banach spaces; to the monograph of J. Mujica \cite{Muj-86} for the theory of holomorphic mappings on Banach spaces; and to the book of A. Pietsch \cite{Pie-80} for the theory of operator ideals.

\bigskip

\textbf{Notation.} Through the paper, given a complex Banach space $E$, we denote by $B_E$, $\stackrel{\circ}{B}_E$, $S_E$ and $E^*$ the closed unit ball, the open unit ball, the unit sphere and the dual space of $E$, respectively. For a set $A\subseteq E$, $\lin(A)$, $\overline{\lin}(A)$, $\co(A)$, $\overline{\co}(A)$, $\aco(A)$ and $\overline{\aco}(A)$ stand for the linear hull, the norm-closed linear hull, the convex hull, the norm-closed convex hull, the absolutely convex hull, the norm-closed absolutely convex hull of $A$ in $E$, respectively. If $E$ and $F$ are locally convex Hausdorff spaces, $\L(E;F)$ denotes the vector space of all continuous linear operators from $E$ into $F$. Unless stated otherwise, if $E$ and $F$ are Banach spaces, we will understand that they are endowed with the norm topology. Given $T\in\L(E;F)$, $T^*\colon F^*\to E^*$ denotes the adjoint operator of $T$. %Other specific notation will be introduced in the following sections.

We will sometimes use the following notation: for each $x\in E$ and $x^*\in E^*$, $\left\langle x^*,x\right\rangle$ is defined to be $x^*(x)$.

\section{Preliminaries}\label{section 1}

Let $U$ be an open subset of a complex Banach space $E$. We know that $\H^\infty(U)$ is a Banach space under the supremum norm and it is actually a dual Banach space. In fact, there are different ways to construct a predual of $\H^\infty(U)$. The most straightforward one is as the norm-closed linear subspace of $\H^\infty(U)^*$ generated by the functionals $\delta(x)\in\H^\infty(U)^*$ with $x\in U$, defined by
$$
\left\langle\delta(x),f\right\rangle=f(x)\qquad \left(f\in\H^\infty(U)\right).
$$
Theorem 2.1 in \cite{Muj-91} justifies the following notation.

\begin{definition}
Let $U$ be an open subset of a complex Banach space $E$. The space $\G^\infty(U)$ is the norm-closed linear subspace of $\H^\infty(U)^*$ given by $\overline{\lin}\left\{\delta(x)\colon x\in U\right\}$.
\end{definition}

For consistency with the preceding notation, we may consider the mapping $\delta_U\colon U\to\G^\infty(U)$ defined by $\delta_U(x)=\delta(x)$ for $x\in U$.

In \cite{Muj-91}, Mujica established the following properties of $\mathcal{G}^\infty(U)$ and $\delta_U$.

\begin{theorem}\label{teo1}\cite[Theorem 2.1]{Muj-91}
Let $E$ be a complex Banach space and let $U$ be an open set in $E$.
\begin{enumerate}
\item $\delta_U\colon U\to\G^\infty(U)$ is a holomorphic mapping with $\left\|\delta(x)\right\|=1$ for all $x\in U$.% that is norm-to-weak* continuous from $U$ to $\H^\infty(U)^*$.
\item For every complex Banach space $F$ and every mapping $f\in\H^\infty(U,F)$, there exists a unique operator $T_f\in\L(\G^\infty(U);F)$ such that $T_f\circ\delta_U=f$, that is, the diagram
$$
\begin{tikzpicture}
\node (U) {$U$};
\node (GU) [below of=U] {$\G^\infty(U)$};
\node (F) [right of=GU] {$F$};
\draw[->] (U) to node {$f$} (F);
\draw[->] (U) to node [swap] {$\delta_U$} (GU);
\draw[->, dashed] (GU) to node [swap] {$T_f$} (F);
\end{tikzpicture}
$$
commutes. Furthermore, $\left\|T_f\right\|=\left\|f\right\|_\infty$.
\item For every complex Banach space $F$, the mapping $J_{U,F}\colon f\mapsto T_f$ is an isometric isomorphism from $\H^\infty(U,F)$ onto $\L(\G^\infty(U);F)$. Its inverse $J^{-1}_{U,F}$ is the mapping $T\mapsto T\circ\delta_U$ from $\L(\G^\infty(U);F)$ onto $\H^\infty(U,F)$.
\item $\H^\infty(U)$ is isometrically isomorphic to $\G^\infty(U)^*$, via the mapping $J_U:=J_{U,\mathbb{C}}\colon\H^\infty(U)\to\G^\infty(U)^*$ given by
$$
J_U(f)=T_f\qquad (f\in\H^\infty(U)).
$$
As a consequence, we have
$$
\left\langle J_U(f),\delta_U(x)\right\rangle=f(x)\qquad (f\in\H^\infty(U),\; x\in U).
$$
\item The closed unit ball of $\G^\infty(U)$ coincides with the norm-closed absolutely convex hull of $\delta_U(U)$.$\hfill\Box$
\end{enumerate}
\end{theorem}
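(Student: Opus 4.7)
The plan is to deduce all five parts from two ingredients: the holomorphy of $\delta_U$ (via a standard weak-holomorphy argument) and a single Hahn-Banach norm inequality that drives the linearization $f \mapsto T_f$.

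For part (1), the equality $\|\delta(x)\|=1$ is immediate: the constant function $\mathbf{1}\in\H^\infty(U)$ has norm $1$ and pairs with $\delta(x)$ to give $1$, while $|f(x)|\le\|f\|_\infty$ gives the opposite bound. For the holomorphy of $\delta_U$, I would view it as a locally bounded map into $\H^\infty(U)^*$ and note that for each $f\in\H^\infty(U)$ the scalar function $x\mapsto\langle\delta_U(x),f\rangle=f(x)$ is holomorphic. Since the canonical image of $\H^\infty(U)$ in $\H^\infty(U)^{**}$ is a norming subspace separating points of $\H^\infty(U)^*$, the standard criterion (weak holomorphy with respect to a separating, norming family together with local boundedness implies norm holomorphy) yields holomorphy of $\delta_U$ into $\H^\infty(U)^*$, and hence into the norm-closed subspace $\G^\infty(U)$.

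For parts (2) and (3), given $f\in\H^\infty(U,F)$, define $T_f$ on the dense subspace $\lin\{\delta(x):x\in U\}\subseteq\G^\infty(U)$ by $T_f\bigl(\sum_{i=1}^n\lambda_i\delta(x_i)\bigr)=\sum_{i=1}^n\lambda_i f(x_i)$. The central step is the inequality
$$
\left\|\sum_{i=1}^n\lambda_i f(x_i)\right\|_F\le\|f\|_\infty\left\|\sum_{i=1}^n\lambda_i\delta(x_i)\right\|,
$$
which I would establish by choosing, via Hahn-Banach, $y^*\in S_{F^*}$ norming the left-hand side; then $y^*\circ f\in\H^\infty(U)$ has $\|y^*\circ f\|_\infty\le\|f\|_\infty$, and the bound follows by pairing with $\sum_i\lambda_i\delta(x_i)$. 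This inequality simultaneously yields well-definedness of $T_f$ (if $\sum_i\lambda_i\delta(x_i)=0$ then $\sum_i\lambda_i f(x_i)=0$), boundedness $\|T_f\|\le\|f\|_\infty$, and hence continuous extension to $\G^\infty(U)$. The reverse norm bound is immediate from $\|f(x)\|_F=\|T_f(\delta(x))\|_F\le\|T_f\|$. Uniqueness of $T_f$ follows from density. Surjectivity of $J_{U,F}:f\mapsto T_f$ is clear: for $T\in\L(\G^\infty(U);F)$, the mapping $T\circ\delta_U$ lies in $\H^\infty(U,F)$ by part (1) and satisfies $T_{T\circ\delta_U}=T$, which also identifies the inverse.

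Part (4) is the scalar case $F=\mathbb{C}$ of (3), and the identity $\langle J_U(f),\delta_U(x)\rangle=f(x)$ is just the commutativity $T_f\circ\delta_U=f$. For part (5), the inclusion $\overline{\aco}(\delta_U(U))\subseteq B_{\G^\infty(U)}$ follows from $\|\delta(x)\|=1$ together with the closed absolutely convex nature of $B_{\G^\infty(U)}$. For the reverse, I would argue by contradiction: any $\phi\in B_{\G^\infty(U)}\setminus\overline{\aco}(\delta_U(U))$ could be strictly separated by a functional on $\G^\infty(U)$, which via part (4) corresponds to some $f\in\H^\infty(U)$ satisfying $|\langle\phi,f\rangle|>\sup_{x\in U}|f(x)|=\|f\|_\infty$, contradicting $\|\phi\|\le 1$. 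The main obstacle, in my view, is the Hahn-Banach norm inequality underlying $T_f$: it is the sole place where a genuine duality argument is required, and it must carry the weight of both well-definedness and norm control.
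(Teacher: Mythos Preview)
The paper does not give its own proof of this theorem: it is quoted from Mujica \cite{Muj-91} and closed with a bare $\Box$. Your proposal is therefore supplying an argument where the authors defer to the literature, and it is correct. The Hahn--Banach inequality you isolate is indeed the engine for (2)--(3), part (4) is the scalar case, and the separation argument for (5) is the standard one once (4) is available. The only step deserving a sharper reference is the weak-holomorphy criterion in (1): holomorphy of $x\mapsto\langle\delta(x),f\rangle$ for $f$ ranging over a norming subspace, together with local boundedness, does imply norm holomorphy, but this is a refinement of Dunford's theorem (see e.g.\ Arendt--Nikolski or Grosse-Erdmann) rather than the classical statement itself, and you should cite it precisely.

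For comparison, Mujica's original route is genuinely different from yours. As the paper's introduction recounts, he \emph{defines} $\G^\infty(U)$ via Ng's Dixmier-type theorem as the set of functionals on $\H^\infty(U)$ that are $\tau_c$-continuous on the closed unit ball; part (4) is then an immediate consequence of Ng's abstract predual construction, and the identification with $\overline{\lin}\{\delta(x):x\in U\}$ together with (5) emerges from a bipolar argument. Your approach, which starts from the paper's adopted definition of $\G^\infty(U)$ as that closed linear span, bypasses the compact-open topology and Ng's theorem entirely and is in that sense more elementary; Mujica's, on the other hand, yields an intrinsic description of exactly which functionals on $\H^\infty(U)$ belong to the predual.
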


%The isometric isomorphism $J^{-1}_U\colon\G^\infty(U)^*\to\H^\infty(U)$ allows us to consider the weak* topology on $\H^\infty(U)$, that is, the topology
%$$
%\left\{J^{-1}_U(V)\colon V \text{ is open in } \left(\G^\infty(U)^*,w^*\right)\right\}.
%$$
From Theorem \ref{teo1} (4), we immediately deduce that, on bounded subsets of $\H^\infty(U)$, the weak* topology agrees with the topology of pointwise convergence.

\begin{corollary}\label{cor-1}
Let $U$ be an open subset of a complex Banach space $E$. Let $(f_i)$ be a net in $\H^\infty(U)$ and $f\in\H^\infty(U)$.
\begin{enumerate}
	\item If $(f_i)\to f$ weak* in $\H^\infty(U)$, then $(f_i)\to f$ pointwise on $U$.
	\item It $(f_i)$ is bounded in $\H^\infty(U)$ and $(f_i)\to f$ pointwise on $U$, then $(f_i)\to f$ weak* in $\H^\infty(U)$.$\hfill\Box$
\end{enumerate}
\end{corollary}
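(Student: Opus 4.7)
The plan is to reduce both statements to the isometric identification $\H^\infty(U)\cong\G^\infty(U)^*$ provided by Theorem \ref{teo1} (4), using in particular the formula $\langle J_U(f),\delta_U(x)\rangle=f(x)$ and the fact that $\G^\infty(U)=\overline{\lin}\{\delta_U(x):x\in U\}$.

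For part (1), assume $(f_i)\to f$ weak* in $\H^\infty(U)$. Transporting to $\G^\infty(U)^*$ via $J_U$, this means $\langle J_U(f_i),\varphi\rangle\to\langle J_U(f),\varphi\rangle$ for every $\varphi\in\G^\infty(U)$. Specializing to $\varphi=\delta_U(x)$ for an arbitrary $x\in U$ and invoking Theorem \ref{teo1} (4), we obtain $f_i(x)\to f(x)$, which is the desired pointwise convergence.

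For part (2), suppose $(f_i)$ is norm-bounded in $\H^\infty(U)$ and $f_i(x)\to f(x)$ for every $x\in U$. Since $J_U$ is an isometry, the net $(J_U(f_i))$ is bounded in $\G^\infty(U)^*$, say by some $M>0$, which we may take to also dominate $\|J_U(f)\|$. By linearity, the relation $\langle J_U(f_i),\delta_U(x)\rangle=f_i(x)$ together with pointwise convergence yields $\langle J_U(f_i),\psi\rangle\to\langle J_U(f),\psi\rangle$ for every $\psi\in\lin\{\delta_U(x):x\in U\}$. Now fix $\varphi\in\G^\infty(U)$ and $\varepsilon>0$. Since $\G^\infty(U)=\overline{\lin}\{\delta_U(x):x\in U\}$, we can pick $\psi$ in the linear hull with $\|\varphi-\psi\|<\varepsilon$, and then estimate
\[
|\langle J_U(f_i)-J_U(f),\varphi\rangle|\le |\langle J_U(f_i)-J_U(f),\psi\rangle|+2M\,\varepsilon.
\]
The first term tends to $0$ by the preceding step, so $\limsup_i|\langle J_U(f_i)-J_U(f),\varphi\rangle|\le 2M\varepsilon$; letting $\varepsilon\to 0$ gives weak* convergence.

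Neither step presents a serious obstacle; the argument is essentially the classical principle that a bounded net in a dual Banach space converges weak* if and only if it converges pointwise on a subset whose closed linear span is the entire predual, applied to the canonical generating set $\delta_U(U)$ of $\G^\infty(U)$. The only point requiring minor attention is bounding the error uniformly in $i$, which is handled by the norm bound on $(J_U(f_i))$ inherited from the supremum-norm bound on $(f_i)$.
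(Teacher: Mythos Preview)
Your proof is correct and follows exactly the approach the paper intends: the paper gives no detailed argument (note the $\Box$ after the statement), merely remarking that the result is an immediate consequence of Theorem~\ref{teo1}~(4), and your write-up is the natural elaboration of that remark via the identity $\langle J_U(f),\delta_U(x)\rangle=f(x)$ and the density of $\lin(\delta_U(U))$ in $\G^\infty(U)$.
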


Using Corollary \ref{cor-1}, we obtain the following result.

\begin{corollary}\label{cor-3}
Let $E$ and $F$ be complex Banach spaces, let $U$ and $V$ be open subsets of $E$ and $F$, respectively, and let $f\colon U\to V$ be a holomorphic mapping. Then there exists a unique operator $\widehat{f}\in\L(\G^\infty(U);\G^\infty(V))$ such that $\widehat{f}\circ\delta_U=\delta_V\circ f$. Furthermore, $||\widehat{f}||=1$.
\end{corollary}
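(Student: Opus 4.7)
The plan is to reduce the statement to a direct application of Mujica's universal property (Theorem \ref{teo1}(2)) applied to the composition $\delta_V\circ f$.

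First, I would observe that $\delta_V\circ f\colon U\to\G^\infty(V)$ is holomorphic because both $f\colon U\to V\subseteq F$ and $\delta_V\colon V\to\G^\infty(V)$ are holomorphic (Theorem \ref{teo1}(1)), and composition of holomorphic mappings between open subsets of Banach spaces is holomorphic. Next, using Theorem \ref{teo1}(1) once more, $\|\delta_V(y)\|=1$ for every $y\in V$, hence
$$
\|\delta_V\circ f\|_\infty=\sup_{x\in U}\|\delta_V(f(x))\|=1,
$$
so that $\delta_V\circ f\in\H^\infty(U,\G^\infty(V))$ with supremum norm equal to $1$.

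I would then apply Theorem \ref{teo1}(2) with target Banach space $\G^\infty(V)$ and bounded holomorphic mapping $\delta_V\circ f$. This produces a unique operator $\widehat{f}:=T_{\delta_V\circ f}\in\L(\G^\infty(U);\G^\infty(V))$ satisfying
$$
\widehat{f}\circ\delta_U=\delta_V\circ f,
$$
and, by the isometric identification in Theorem \ref{teo1}(2),
$$
\|\widehat{f}\|=\|\delta_V\circ f\|_\infty=1.
$$

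For uniqueness of an operator $S\in\L(\G^\infty(U);\G^\infty(V))$ satisfying $S\circ\delta_U=\delta_V\circ f$, I would invoke the fact that $\G^\infty(U)=\overline{\lin}\{\delta_U(x):x\in U\}$; any continuous linear operator is determined by its values on a total set, so any two such operators must coincide on $\G^\infty(U)$. No obstacle is anticipated: the only subtlety is recognizing that $\delta_V\circ f$ belongs to $\H^\infty(U,\G^\infty(V))$, which is immediate because $\delta_V$ takes values in the unit sphere of $\G^\infty(V)$.
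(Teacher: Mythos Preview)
Your argument is correct and considerably more direct than the paper's. The paper constructs $\widehat{f}$ by duality: it introduces the composition operator $C_f\colon\H^\infty(V)\to\H^\infty(U)$, $C_f(g)=g\circ f$, checks (via Corollary~\ref{cor-1}) that $J_U\circ C_f\circ J_V^{-1}$ is weak*-to-weak* continuous on $\G^\infty(V)^*$, and then invokes the standard fact that such an operator is the adjoint of a unique bounded operator $\widehat{f}\in\L(\G^\infty(U);\G^\infty(V))$; finally it verifies $\widehat{f}\circ\delta_U=\delta_V\circ f$ by pairing against functionals. You bypass all of this by simply observing that $\delta_V\circ f\in\H^\infty(U,\G^\infty(V))$ with $\|\delta_V\circ f\|_\infty=1$ and reading off $\widehat{f}=T_{\delta_V\circ f}$ directly from Theorem~\ref{teo1}(2). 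Your route is shorter and uses only the universal property; the paper's route has the minor by-product of the explicit identification $(\widehat{f})^*=J_U\circ C_f\circ J_V^{-1}$, though this formula is not used elsewhere in the paper. One small remark: your separate uniqueness paragraph is redundant, since uniqueness is already part of the conclusion of Theorem~\ref{teo1}(2).
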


\begin{proof}
Let $C_f\colon\H^\infty(V)\to\H^\infty(U)$ be the composition operator defined by
$$
C_f(g)=g\circ f\qquad (g\in\H^\infty(V)).
$$
Clearly, $C_f\in\L(\H^\infty(V);\H^\infty(U))$ with $||C_f||=1$. We claim that $J_U\circ C_f\circ J^{-1}_V$ is weak*-to-weak* continuous from $\G^\infty(V)^*$ into $\G^\infty(U)^*$. Let $(\phi_i)$ be a net in $\G^\infty(V)^*$ and $\phi\in\G^\infty(V)^*$. Assume that $(\phi_i)\to\phi$ weak* in $\G^\infty(V)^*$. By \cite[Corollary 2.6.10]{Meg-98}, $(\phi_i)$ is norm-bounded in $\G^\infty(V)^*$. Clearly, $(J^{-1}_V(\phi_i))\to J^{-1}_V(\phi)$ weak* in $\H^\infty(V)$. Hence $(J^{-1}_V(\phi_i))\to J^{-1}_V(\phi)$ pointwise on $V$ by Corollary \ref{cor-1} (1). In particular, $(J^{-1}_V(\phi_i)\circ f)\to J^{-1}_V(\phi)\circ f$ pointwise on $U$. Since the net $(J^{-1}_V(\phi_i)\circ f)$ is norm-bounded since $\left\|J^{-1}_V(\phi_i)\circ f\right\|_\infty\leq \left\|J^{-1}_V(\phi_i)\right\|_\infty=\left\|\phi_i\right\|$ for all $i$, Corollary \ref{cor-1} (2) guarantees that $(J^{-1}_V(\phi_i)\circ f)\to J^{-1}_V(\phi)\circ f$ weak* in $\H^\infty(U)$, that is, $(C_f(J^{-1}_V(\phi_i)))\to C_f(J^{-1}_V(\phi))$ weak* in $\H^\infty(U)$. Finally, $(J_U(C_f(J^{-1}_V(\phi_i))))\to J_U(C_f(J^{-1}_V(\phi)))$ weak* in $\G^\infty(U)^*$, and this proves our claim.

Now, by \cite[Corollaries 3.1.11 and 3.1.5]{Meg-98}, there is a unique operator $\widehat{f}\in\L(\G^\infty(U);\G^\infty(V))$ such that $(\widehat{f})^*=J_U\circ C_f\circ J^{-1}_V$. Clearly, $||\widehat{f}||=||(\widehat{f})^*||=||C_f||=1$. Given $g\in\H^\infty(V)$ and $x\in U$, we have
$$
\left\langle(\widehat{f})^*(J_V(g)),\delta_U(x)\right\rangle=\left\langle J_V(g)\circ\widehat{f},\delta_U(x)\right\rangle=\left\langle J_V(g),\widehat{f}(\delta_U(x))\right\rangle
$$
and
\begin{align*}
\left\langle (J_U\circ C_f\circ J^{-1}_V)(J_V(g)),\delta_U(x)\right\rangle&=\left\langle J_U(C_f(g)),\delta_U(x)\right\rangle
=\left\langle J_U(g\circ f),\delta_U(x)\right\rangle\\%=(g\circ f)(x)
&=g(f(x))=\left\langle J_V(g),\delta_V(f(x))\right\rangle .
\end{align*}
From above we infer that
$$
\left\langle J_V(g),\widehat{f}(\delta_U(x))\right\rangle=\left\langle J_V(g),\delta_V(f(x))\right\rangle \qquad (g\in\H^\infty(V),\; x\in U),
$$
the surjectivity of $J_V\colon\H^\infty(V)\to\G^\infty(V)^*$ yields
$$
\left\langle\phi,\widehat{f}(\delta_U(x))\right\rangle=\left\langle\phi,\delta_V(f(x))\right\rangle \qquad (\phi\in\G^\infty(V)^*,\; x\in U),
$$
and this implies that $\widehat{f}\circ\delta_U=\delta_V\circ f$.
\end{proof}

We finish this section with some results related to the transpose of a bounded holomorphic mapping.

Let $U$ be an open subset of a complex Banach space $E$, let $F$ be a complex Banach space and let $f\in\H^\infty(U,F)$. Given $\varphi\in F^*$, it is clear that $\varphi\circ f\colon U\to\C$ is holomorphic with
$$
\left|(\varphi\circ f)(x)\right|
%=\left|\varphi(f(x))\right|
\leq\left\|\varphi\right\|\left\|f(x)\right\|\leq\left\|\varphi\right\|\left\|f\right\|_\infty
$$
for all $x\in U$. Hence $\varphi\circ f\in\H^\infty(U)$ with $\left\|\varphi\circ f\right\|_\infty\leq\left\|\varphi\right\|\left\|f\right\|_\infty$. This justifies the following.

\begin{definition}
Let $U$ be an open subset of a complex Banach space $E$, let $F$ be a complex Banach space and $f\in\H^\infty(U,F)$. We will call the transpose mapping $f^t\colon F^*\to\H^\infty(U)$ defined by
$f^t(\varphi)=\varphi\circ f$ the transpose of f.
\end{definition}

Clearly, $f^t$ is linear and continuous with $||f^t||\leq\left\|f\right\|_\infty$. In fact, $||f^t||=\left\|f\right\|_\infty$. Indeed, for $0<\varepsilon<\left\|f\right\|_\infty$, take $x\in U$ such that $\left\|f(x)\right\|>\left\|f\right\|_\infty-\varepsilon$. By Hahn--Banach theorem, there exists $\phi\in F^*$ with $\left\|\phi\right\|=1$ such that $\phi(f(x))=\left\|f(x)\right\|$. We have
\begin{align*}
\left\|f^t\right\|&\geq\sup_{0\neq\varphi\in F^*}\frac{\left\|f^t(\varphi)\right\|_\infty}{\left\|\varphi\right\|}
\geq\frac{\left\|\phi\circ f\right\|_\infty}{\left\|\phi\right\|}=\left\|\phi\circ f\right\|_\infty\\
&\geq \left|\phi(f(x))\right|=\left\|f(x)\right\|>\left\|f\right\|_\infty-\varepsilon .
\end{align*}
Letting $\varepsilon\to 0$, one obtains $||f^t||\geq\left\|f\right\|_\infty$, as desired. Moreover, note that
\begin{align*}
\left\langle(J_U\circ f^t)(\varphi),\delta(x)\right\rangle&=\left\langle J_U(\varphi\circ f),\delta(x)\right\rangle\\
                                                          &=\varphi(f(x))=\left\langle\varphi,T_f(\delta(x))\right\rangle\\
																													&=\left\langle (T_f)^*(\varphi),\delta(x)\right\rangle
\end{align*}
for all $\varphi\in F^*$ and $x\in U$, and since $\G^\infty(U)=\overline{\lin}(\delta_U(U))$, we deduce that $J_U\circ f^t=(T_f)^*$, that is, $f^t=J^{-1}_U\circ (T_f)^*$. We have proved the following.

\begin{proposition}\label{prop-A}
Let $U$ be an open subset of a complex Banach space $E$, let $F$ be a complex Banach space and let $f\in\H^\infty(U,F)$. Then $f^t\in\L(F^*;\H^\infty(U))$ with $||f^t||=\left\|f\right\|_\infty$. Furthermore, $f^t=J^{-1}_U\circ (T_f)^*$. $\hfill\Box$
\end{proposition}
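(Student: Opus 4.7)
The plan is to verify three things in sequence: that $f^t$ maps $F^*$ into $\H^\infty(U)$ linearly and continuously with $\|f^t\|\leq\|f\|_\infty$, that the reverse norm inequality holds, and finally that the identification $f^t=J_U^{-1}\circ(T_f)^*$ is valid.

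For the first part, I would fix $\varphi\in F^*$ and note that $\varphi\circ f\colon U\to\C$ is the composition of the holomorphic map $f$ with the continuous linear functional $\varphi$, hence holomorphic, while the pointwise bound $|\varphi(f(x))|\leq\|\varphi\|\|f\|_\infty$ places it in $\H^\infty(U)$ with $\|\varphi\circ f\|_\infty\leq\|\varphi\|\|f\|_\infty$. The assignment $\varphi\mapsto\varphi\circ f$ is obviously linear, and the preceding estimate immediately yields that $f^t$ is continuous with $\|f^t\|\leq\|f\|_\infty$.

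For the reverse inequality, the natural move is a Hahn--Banach argument: for $0<\varepsilon<\|f\|_\infty$, choose $x\in U$ with $\|f(x)\|>\|f\|_\infty-\varepsilon$ and pick a norm-one $\phi\in F^*$ with $\phi(f(x))=\|f(x)\|$. Then $\|f^t(\phi)\|_\infty\geq|\phi(f(x))|=\|f(x)\|>\|f\|_\infty-\varepsilon$, so $\|f^t\|\geq\|f\|_\infty-\varepsilon$, and letting $\varepsilon\to 0$ closes the gap. This gives the equality $\|f^t\|=\|f\|_\infty$.

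For the factorization identity, I would test $J_U\circ f^t$ and $(T_f)^*$ against the evaluation functionals $\delta(x)$, $x\in U$. Using the duality formula $\langle J_U(g),\delta(x)\rangle=g(x)$ from Theorem \ref{teo1}(4), one side computes to $\langle J_U(\varphi\circ f),\delta(x)\rangle=\varphi(f(x))$; using $T_f\circ\delta_U=f$ from Theorem \ref{teo1}(2), the other side yields $\langle\varphi,T_f(\delta(x))\rangle=\varphi(f(x))$. Since the two continuous linear maps $J_U\circ f^t$ and $(T_f)^*$ from $F^*$ into $\G^\infty(U)^*$ agree on the set $\delta_U(U)$, and $\G^\infty(U)=\overline{\lin}(\delta_U(U))$, they agree on all of $\G^\infty(U)$. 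Composing with $J_U^{-1}$ then delivers $f^t=J_U^{-1}\circ(T_f)^*$. The only mildly delicate step is the Hahn--Banach construction for the reverse norm inequality; everything else is a direct unpacking of Theorem \ref{teo1}.
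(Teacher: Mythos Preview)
Your proof is correct and follows essentially the same approach as the paper: the same boundedness estimate for $\varphi\circ f$, the identical Hahn--Banach argument for the reverse inequality, and the same verification of $J_U\circ f^t=(T_f)^*$ by testing against $\delta(x)$ and invoking $\G^\infty(U)=\overline{\lin}(\delta_U(U))$. The only imprecision is a phrasing slip near the end---the maps $J_U\circ f^t$ and $(T_f)^*$ do not ``agree on $\delta_U(U)$'' (they are defined on $F^*$, not $\G^\infty(U)$); rather, for each fixed $\varphi\in F^*$ the functionals $(J_U\circ f^t)(\varphi)$ and $(T_f)^*(\varphi)$ agree on $\delta_U(U)$---but the intended argument is clear and matches the paper's.
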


We next see that the mapping $f\mapsto f^t$ identifies $\H^\infty(U,F)$ with the subspace of $\L(F^*;\H^\infty(U))$ formed by all weak*-to-weak* continuous linear operators from $F^*$ into $\H^\infty(U)$ (see \cite[Corollary 3.1.12]{Meg-98}).

\begin{proposition}\label{teo-4-1}
Let $U$ be an open subset of a complex Banach space $E$ and let $F$ be a complex Banach space. The mapping $f\mapsto f^t$ is an isometric isomorphism from $\H^\infty(U,F)$ onto $\L((F^*,w^*);(\H^\infty(U),w^*))$.
\end{proposition}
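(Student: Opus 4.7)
The plan is to reduce everything to the factorization $f^t = J_U^{-1} \circ (T_f)^*$ established in Proposition \ref{prop-A}, together with the fact that $J_U$ is not only an isometric isomorphism but also a weak*-to-weak* homeomorphism, so that Corollary 3.1.12 of Megginson applies on the $\G^\infty(U)^*$ side.

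First I would record that $J_U \colon \H^\infty(U) \to \G^\infty(U)^*$ is a weak*-to-weak* homeomorphism. This is automatic because $\H^\infty(U)$ is paired with $\G^\infty(U)$ precisely via $J_U$: concretely, $\langle J_U(f),\delta_U(x)\rangle = f(x)$ and $\delta_U(U)$ has closed linear hull equal to $\G^\infty(U)$, so weak* convergence on either side is pointwise convergence of the defining functionals. Consequently $J_U^{-1}$ also sends weak*-convergent nets to weak*-convergent nets.

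Next, I would verify that the map $f\mapsto f^t$ lands inside $\L((F^*,w^*);(\H^\infty(U),w^*))$. By Proposition \ref{prop-A} we have $f^t = J_U^{-1} \circ (T_f)^*$. The adjoint $(T_f)^*$ is always weak*-to-weak* continuous, and $J_U^{-1}$ is weak*-to-weak* continuous by the previous paragraph, so the composition is weak*-to-weak* continuous. Linearity of $f\mapsto f^t$ is immediate from the definition, and the isometric statement $\|f^t\|=\|f\|_\infty$ is exactly Proposition \ref{prop-A}. Injectivity then follows from the isometry, or alternatively from the fact that $T_f\mapsto (T_f)^*$ is injective.

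For surjectivity, which I expect to be the main point, let $T\in\L((F^*,w^*);(\H^\infty(U),w^*))$. Composing with the weak*-to-weak* homeomorphism $J_U$ produces a weak*-to-weak* continuous operator $J_U\circ T\colon F^*\to \G^\infty(U)^*$. By the standard characterization of adjoint operators as the weak*-to-weak* continuous linear maps between dual spaces (Corollary 3.1.12 in Megginson), there exists a unique $S\in\L(\G^\infty(U);F)$ with $S^* = J_U\circ T$. Invoking Theorem \ref{teo1}(3), write $S = T_f$ for a unique $f\in\H^\infty(U,F)$. Then
\[
f^t = J_U^{-1}\circ (T_f)^* = J_U^{-1}\circ S^* = J_U^{-1}\circ J_U\circ T = T,
\]
which proves surjectivity and completes the argument. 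The only potential subtlety is the weak* homeomorphism property of $J_U$, but this is built into Mujica's construction because the pairing between $\H^\infty(U)$ and $\G^\infty(U)$ is precisely $(f,\delta_U(x))\mapsto f(x)$.
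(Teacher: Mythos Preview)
Your proof is correct and follows essentially the same route as the paper: factor $f^t=J_U^{-1}\circ(T_f)^*$, use that adjoints are weak*-to-weak* continuous, and for surjectivity compose with $J_U$ and invoke the characterization of adjoints as the weak*-to-weak* continuous operators (the paper cites Theorem 3.1.11 rather than Corollary 3.1.12 in Megginson, but these are the same result in slightly different packaging). Your explicit remark that $J_U$ is a weak*-to-weak* homeomorphism is a point the paper leaves implicit; note that this is immediate from the fact that the weak* topology on $\H^\infty(U)$ is \emph{defined} via the identification $J_U$ with $\G^\infty(U)^*$, so no separate argument about pointwise convergence on $\delta_U(U)$ is needed.
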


\begin{proof}
Let $f\in\H^\infty(U,F)$. Clearly, $f^t=J^{-1}_U\circ (T_f)^*\in\L((F^*,w^*);(\H^\infty(U),w^*))$ by Theorem \ref{teo1} (2) and \cite[Theorem 3.1.11]{Meg-98}. %Indeed, if $V$ is open in $\left(\G^\infty(U)^*,w^*\right)$, we have
%$$
%(f^t)^{-1}(J^{-1}_U(V))=(J^{-1}_U\circ (T_f)^*)^{-1}(J^{-1}_U(V))=(((T_f)^*)^{-1}\circ J_U)(J^{-1}_U(V))=((T_f)^*)^{-1}(V)
%$$
%is open in $(F^*,w^*)$.
Moreover, we have $||f^t||=\left\|f\right\|_\infty$ by Proposition \ref{prop-A}. It remains to show the surjectivity of the mapping in the statement. Take $T\in\L((F^*,w^*);(\H^\infty(U),w^*))$. Then $J_U\circ T\in\L((F^*,w^*);(\G^\infty(U)^*,w^*))$ %(indeed, if $V$ is open in $\left(\G^\infty(U)^*,w^*\right)$, one obtains that $(J_U\circ T)^{-1}(V)=T^{-1}(J^{-1}_U(V))$ is open in $(F^*,w^*)$)
and, by \cite[Theorem 3.1.11]{Meg-98}, there is a $S\in\L(\G^\infty(U);F)$ such that $S^*=J_U\circ T$. By Theorem \ref{teo1} (3), there exists $f\in\H^\infty(U,F)$ such that $T_f=S$. Hence $T=J^{-1}_U\circ(T_f)^*=f^t$, as desired.
\end{proof}

\section{Linearization of holomorphic mappings with compact type range}\label{section 2}

Let us recall that a bounded linear operator between Banach spaces $T\colon E\to F$ is said to be \textit{compact (separable, weakly compact, Rosenthal, Asplund)} if $T(B_E)$ is relatively compact (respectively, separable, relatively weakly compact, Rosenthal, Asplund) in $F$.

We denote by $\F(E,F)$, $\overline{\F}(E,F)$, $\K(E,F)$, $\S(E,F)$, $\W(E,F)$, $\R(E,F)$ and $\AS(E,F)$ the linear spaces of bounded finite-rank linear operators, approximable linear operators (i.e., operators which are the norm limits of bounded finite-rank operators), compact linear operators, bounded separable linear operators, weakly compact linear operators, Rosenthal linear operators and  Asplund linear operators
%and Banach--Saks linear operators
from $E$ into $F$, respectively. The following inclusions are known:
\begin{align*}
\F(E,F)&\subseteq\overline{\F}(E,F)\subseteq\K(E,F)\subseteq\W(E,F)\subseteq\R(E,F)\cap\AS(E,F),\\
\K(E,F)&\subseteq\S(E,F).%\\ %\cap\BS(E,F),\\ \BS(E,F)&\subseteq\W(E,F).
\end{align*}

Our aim is to study the following holomorphic variants of these concepts. If $U$ is an open subset of a complex Banach space $E$ and $F$ is a complex Banach space, we will consider bounded holomorphic mappings $f\colon U\to F$ that have a range $f(U)\subseteq F$ satisfying an algebraic or topological property as, for instance, finite-dimensional range, relatively compact range, separable range, relatively weakly compact range, Rosenthal range or Asplund range.% or a Banach--Saks range.

%\begin{definition}
%Let $E$ be a complex Banach space, $U$ an open subset of $E$ and $F$ a complex Banach space. A mapping $f\colon U\to F$ is said to be \textit{compact (respectively, weakly compact, Rosenthal, Asplund)} if $f(U)$ is relatively compact (respectively, relatively weakly compact, Rosenthal, Asplund) in $F$.
%\end{definition}

Note that if $T\in\L(E,F)$, then $T$ is a compact (respectively, separable, weakly compact, Rosenthal, Asplund) linear operator if and only if the holomorphic mapping $T|_{\stackrel{\circ}{B}_E}$ has relatively compact (respectively, separable, relatively weakly compact, Rosenthal, Asplund) range.

The study of the connections between these compactness properties of a mapping $f\in\H^\infty(U,F)$ and its corresponding associated operator $T_f\in\L(\G^\infty(U);F)$ was initiated by Mujica in Propositions 3.1 and 3.4 of \cite{Muj-91}. Apparently, these results are the only known on this question and they have been included here with their proofs for the convenience of the reader. We have divided our study for the different types of holomorphic mappings considered.

\subsection{Bounded finite-rank holomorphic mappings}

Let us recall (see \cite[p. 72]{Muj-91}) that a mapping $f\colon U\to F$ has \textit{finite rank} if $\lin(f(U))$ is a finite dimensional subspace of $F$ in which case this dimension is called the \textit{rank} of $f$ and denoted by $\rank(f)$. Let $\H^\infty_{\F}(U,F)$ denote the linear space of all bounded finite-rank holomorphic mappings from $U$ to $F$.

\begin{theorem}\label{teo-3-0}
Let $U$ be an open subset of a complex Banach space $E$, let $F$ be a complex Banach space and let $f\in\H^\infty(U,F)$. The following are equivalent:
\begin{enumerate}
	\item $f\colon U\to F$ has finite rank.
	\item $T_f\colon\G^\infty(U)\to F$ has finite rank.
	\item $f^t\colon F^*\to\H^\infty(U)$ has finite rank.
\end{enumerate}
In that case, $\rank(f)=\rank(T_f)=\rank((T_f)^*)=\rank(f^t)$.
\end{theorem}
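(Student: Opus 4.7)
The plan is to prove the chain of implications (1) $\Rightarrow$ (2) $\Rightarrow$ (1) directly from Mujica's linearization identity $T_f\circ\delta_U=f$ together with the density $\G^\infty(U)=\overline{\lin}(\delta_U(U))$, and then deduce (2) $\Leftrightarrow$ (3) and the remaining rank equalities from the formula $f^t=J^{-1}_U\circ (T_f)^*$ provided by Proposition \ref{prop-A}.

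For (1) $\Rightarrow$ (2), the key observation is that if $f$ has finite rank then $M:=\lin(f(U))$ is a finite-dimensional, and therefore norm-closed, linear subspace of $F$. By continuity of $T_f$, the preimage $T_f^{-1}(M)$ is a norm-closed linear subspace of $\G^\infty(U)$, and Theorem \ref{teo1}(2) gives $T_f(\delta_U(x))=f(x)\in M$ for every $x\in U$, so $T_f^{-1}(M)$ contains $\delta_U(U)$. Consequently $T_f^{-1}(M)\supseteq\overline{\lin}(\delta_U(U))=\G^\infty(U)$, which forces $T_f(\G^\infty(U))\subseteq M$ and hence $\rank(T_f)\le\dim M=\rank(f)$. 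The converse (2) $\Rightarrow$ (1) is easier: if $T_f$ has finite rank then $f(U)=T_f(\delta_U(U))\subseteq T_f(\G^\infty(U))$, and since the latter is finite-dimensional we get $\lin(f(U))\subseteq T_f(\G^\infty(U))$ and $\rank(f)\le\rank(T_f)$. Combining the two inequalities yields $\rank(f)=\rank(T_f)$.

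For (2) $\Leftrightarrow$ (3), Proposition \ref{prop-A} states that $f^t=J^{-1}_U\circ(T_f)^*$ with $J^{-1}_U$ an isometric isomorphism from $\G^\infty(U)^*$ onto $\H^\infty(U)$. Hence $f^t(F^*)=J^{-1}_U((T_f)^*(F^*))$, and so $f^t$ has finite rank if and only if $(T_f)^*$ does, with $\rank(f^t)=\rank((T_f)^*)$. It remains to invoke the standard Banach space duality fact that, for any bounded linear operator $S$ between Banach spaces, $S$ has finite rank if and only if $S^*$ does, and then $\rank(S)=\rank(S^*)$; applying this to $S=T_f$ gives $\rank(T_f)=\rank((T_f)^*)$, completing the full string $\rank(f)=\rank(T_f)=\rank((T_f)^*)=\rank(f^t)$.

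I do not expect any real obstacle here; the only point requiring minimal care is the implication (1) $\Rightarrow$ (2), where one must use that a finite-dimensional subspace of a Banach space is automatically norm-closed in order to pass, via continuity of $T_f$, from the defining property on the set $\delta_U(U)$ to its closed linear hull $\G^\infty(U)$. Everything else is direct bookkeeping with the identities $T_f\circ\delta_U=f$ and $f^t=J^{-1}_U\circ(T_f)^*$ already established in the preliminaries.
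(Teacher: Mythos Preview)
Your proposal is correct and follows essentially the same approach as the paper: the equivalence $(1)\Leftrightarrow(2)$ is obtained from $T_f\circ\delta_U=f$ together with closedness of the finite-dimensional subspace $\lin(f(U))$ and the density $\G^\infty(U)=\overline{\lin}(\delta_U(U))$, while $(2)\Leftrightarrow(3)$ and the rank equalities come from $f^t=J^{-1}_U\circ(T_f)^*$ and the duality fact $\rank(S)=\rank(S^*)$. The only cosmetic differences are that the paper phrases the $(1)\Rightarrow(2)$ step via a chain of inclusions rather than your preimage argument, and cites Pietsch's complete symmetry of the ideal $\F$ in place of your direct invocation of the rank duality fact.
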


\begin{proof}
Recall that $f(U)=T_f(\delta_U(U))$ by Theorem \ref{teo1} (2).

$(1)\Leftrightarrow(2)$ (\cite[Proposition 3.1 (a)]{Muj-91}): If $f$ has finite rank, then $\lin(f(U))$ is finite dimensional and therefore closed in $F$. We have
\begin{align*}
T_f(\G^\infty(U))&=T_f(\overline{\lin}(\delta_U(U)))\subseteq\overline{T_f(\lin(\delta_U(U)))}\\
                 &=\overline{\lin}(T_f(\delta_U(U)))=\overline{\lin}(f(U))=\lin(f(U))
\end{align*}
and hence $T_f$ has finite rank. Conversely, if $T_f$ has finite rank, then $f$ has finite rank since
\begin{align*}
\lin(f(U))&=\lin(T_f(\delta_U(U)))=T_f(\lin(\delta_U(U)))\\
          &\subseteq T_f(\overline{\lin}(\delta_U(U)))=T_f(\G^\infty(U)).
\end{align*}
Furthermore, in this case we have $\rank(f)=\rank(T_f)$.

$(2)\Leftrightarrow(3)$: Since the space $\F(E,F)$ of bounded finite-rank linear operators is a completely symmetric operator ideal (see \cite[4.4.7]{Pie-80}), we have
\begin{align*}
T_f\in\F(\G^\infty(U),F)&\Leftrightarrow (T_f)^*\in\F(F^*,\G^\infty(U)^*)\\
&\Leftrightarrow f^t=J^{-1}_U\circ(T_f)^*\in\F(F^*,\H^\infty(U)).
\end{align*}
In this case we now have $\rank(T_f)=\rank((T_f)^*)=\rank(f^t)$.
\end{proof}

\subsection{Holomorphic mappings with relatively compact range}

We denote by $\H^\infty_{\K}(U,F)$ the linear space of all holomorphic mappings from $U$ to $F$ that have relatively compact range. The equivalence $(1)\Leftrightarrow (3)$ of the next result is a version for holomorphic mappings with relatively compact range of the classical Schauder's theorem on the relationship of the compactness of a bounded linear operator between Banach spaces and its adjoint.

\begin{theorem}\label{teo-4-2}
Let $U$ be an open subset of a complex Banach space $E$, let $F$ be a complex Banach space and let $f\in\H^\infty(U,F)$. The following are equivalent:
\begin{enumerate}
  \item $f\colon U\to F$ has relatively compact range.
	\item $T_f\colon\G^\infty(U)\to F$ is compact.
	\item $f^t\colon F^*\to\H^\infty(U)$ is compact.
	\item $f^t\colon F^*\to\H^\infty(U)$ is bounded-weak*-to-norm continuous.
	\item $f^t\colon F^*\to\H^\infty(U)$ is compact and bounded-weak*-to-weak continuous.
	\item $f^t\colon F^*\to\H^\infty(U)$ is compact and weak*-to-weak continuous.
\end{enumerate}
\end{theorem}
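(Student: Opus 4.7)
The plan is to prove the six conditions equivalent through the chain $(1) \Leftrightarrow (2) \Leftrightarrow (3) \Leftrightarrow (4)$ together with $(3) \Rightarrow (6) \Rightarrow (5) \Rightarrow (3)$. The equivalence $(1) \Leftrightarrow (2)$ is Mujica's observation \cite[Proposition 3.1]{Muj-91}, for which I would give a quick proof based on Theorem \ref{teo1}(5): since $B_{\G^\infty(U)} = \overline{\aco}(\delta_U(U))$, the set $T_f(B_{\G^\infty(U)})$ is sandwiched between $f(U) = T_f(\delta_U(U))$ and $\overline{\aco}(f(U))$, and Mazur's theorem lets the relative compactness pass freely between these three. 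For $(2) \Leftrightarrow (3)$, I would invoke the classical Schauder theorem (a bounded linear operator between Banach spaces is compact iff its adjoint is) combined with Proposition \ref{prop-A}, which writes $f^t = J_U^{-1} \circ (T_f)^*$ with $J_U^{-1}$ an isometric isomorphism preserving compactness.

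The equivalence $(3) \Leftrightarrow (4)$ is a general statement about compact operators out of a dual space. For $(4) \Rightarrow (3)$, I would restrict $f^t$ to the weak*-compact set $B_{F^*}$ (Banach--Alaoglu): bounded-weak*-to-norm continuity forces $f^t(B_{F^*})$ to be norm-compact. For $(3) \Rightarrow (4)$, the crucial input is that $f^t$ is already weak*-to-weak* continuous by Proposition \ref{teo-4-1}. Given a bounded weak*-convergent net $(\varphi_i) \to \varphi$ in $F^*$, one has $f^t(\varphi_i) \to f^t(\varphi)$ weak* in $\H^\infty(U)$, while the whole net lies inside a relatively norm-compact set; a standard subnet argument (every subnet has a further norm-convergent subnet, whose norm limit must coincide with the weak* limit $f^t(\varphi)$) upgrades the convergence to norm.

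For $(3) \Rightarrow (6)$, I would use that compactness of $T_f$ implies weak compactness of $T_f$. By the classical characterization of weak compactness, this is equivalent to $(T_f)^{**}(\G^\infty(U)^{**}) \subseteq F$, which in turn is equivalent to $(T_f)^* \colon F^* \to \G^\infty(U)^*$ being weak*-to-weak continuous (via the standard duality: a bounded operator $T \colon F^* \to Y$ is weak*-to-weak continuous iff $T^*(Y^*) \subseteq F \subseteq F^{**}$). Composing with $J_U^{-1}$, which is simultaneously a weak*-to-weak* and weak-to-weak homeomorphism, transfers this property to $f^t$. Finally, $(6) \Rightarrow (5)$ is immediate since weak*-to-weak continuity on all of $F^*$ is stronger than bounded-weak*-to-weak continuity, and $(5) \Rightarrow (3)$ is trivial since $(5)$ explicitly asserts compactness of $f^t$.

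The main obstacle I anticipate is the verification in $(3) \Rightarrow (6)$: I must be careful to state the correct characterization of weak compactness in terms of the bitranspose and then cleanly translate it to the weak*-to-weak continuity of $(T_f)^*$, making sure the canonical identifications in $F^{**}$ and the intertwining with $J_U^{-1}$ are unambiguous. The subnet argument in $(3) \Rightarrow (4)$ is standard but should also be written out carefully since weak* convergence in $\H^\infty(U)$ means $\sigma(\H^\infty(U), \G^\infty(U))$-convergence, which is weaker than weak, and one needs uniqueness of limits in the topology where one actually has convergence of the subnets.
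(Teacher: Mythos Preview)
Your proof is correct. The equivalences $(1)\Leftrightarrow(2)\Leftrightarrow(3)$ coincide verbatim with the paper's argument (Mazur's theorem and Schauder's theorem combined with $f^t=J_U^{-1}\circ(T_f)^*$). The remaining equivalences diverge in packaging rather than substance: the paper obtains $(2)\Leftrightarrow(4)$ by citing the textbook fact that $T$ is compact iff $T^*$ is $bw^*$-to-norm continuous (Megginson, Theorem~3.4.16) and then disposes of $(4)\Leftrightarrow(5)\Leftrightarrow(6)$ in one line by invoking \cite[Proposition~3.1]{Kim-13}. You instead prove $(3)\Leftrightarrow(4)$ by hand (Banach--Alaoglu for one direction, the weak*-to-weak* continuity of $f^t$ from Proposition~\ref{teo-4-1} plus a compactness/subnet argument for the other), and close the loop $(3)\Rightarrow(6)\Rightarrow(5)\Rightarrow(3)$ via Gantmacher--Nakamura's characterization of weak compactness as weak*-to-weak continuity of the adjoint. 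Your route is more self-contained---it unpacks exactly what the cited results of Megginson and Kim say---at the cost of a few more lines; the paper's route is terser but leans on two external citations. Both are entirely standard, and the ``obstacles'' you flag (the bitranspose bookkeeping in $(3)\Rightarrow(6)$ and the uniqueness-of-limits step in $(3)\Rightarrow(4)$) are routine once written out as you describe.
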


\begin{proof}
$(1)\Leftrightarrow(2)$ (\cite[Proposition 3.1 (b)]{Muj-91}): %Notice first that $f(U)=T_f(\delta_U(U))$ by Theorem \ref{teo-3}.
Since $\delta_U(U)\subseteq B_{\G^\infty(U)}$, if $T_f\in\K(\G^\infty(U),F)$, then $T_f(\delta_U(U))$ must be relatively compact in $F$, and therefore $f\in\H^\infty_{\K}(U,F)$ since $f(U)=T_f(\delta_U(U))$ by Theorem \ref{teo1} (2). Conversely, since $B_{\G^\infty(U)}=\overline{\aco}(\delta_U(U))$ by Theorem \ref{teo1} (5), one has
$$
T_f(B_{\G^\infty(U)})=T_f(\overline{\aco}(\delta_U(U)))\subseteq\overline{\aco}(T_f(\delta_U(U)))\subseteq\overline{\aco}(\overline{T_f(\delta_U(U))}).
$$
So, if $f\in\H^\infty_{\K}(U,F)$, then $T_f(\delta_U(U))$ is relatively compact in $F$, hence $\overline{\aco}(\overline{T_f(\delta_U(U))})$ is compact in $F$ by Mazur's compactness theorem (\cite[Theorem 2.8.15]{Meg-98}) and the fact that $\aco(A)=\co(\mathbb{D}A)$ for any subset $A$ of a normed space $E$, where $\mathbb{D}$ denotes the closed unit disc of $\mathbb{C}$. Therefore $T_f(B_{\G^\infty(U)})$ is relatively compact in $F$, which means that $T_f\in\K(\G^\infty(U),F)$.

$(2)\Leftrightarrow (3)$: Applying Schauder's theorem \cite[Theorem 3.4.15]{Meg-98} and \cite[Proposition 3.4.10]{Meg-98}, we have
\begin{align*}
T_f\in\K(\G^\infty(U),F)&\Leftrightarrow (T_f)^*\in\K(F^*,\G^\infty(U)^*)\\
&\Leftrightarrow f^t=J^{-1}_U\circ(T_f)^*\in\K(F^*,\H^\infty(U)),
\end{align*}

$(2)\Leftrightarrow (4)$: Similarly, one obtains
\begin{align*}
T_f\in\K(\G^\infty(U),F)&\Leftrightarrow (T_f)^*\in\L((F^*,bw^*);\G^\infty(U)^*)\\
&\Leftrightarrow f^t=J^{-1}_U\circ(T_f)^*\in\L((F^*,bw^*);\H^\infty(U)),
\end{align*}
by \cite[Theorem 3.4.16]{Meg-98}, where $bw^*$ denotes the bounded weak* topology.

$(4)\Leftrightarrow (5)\Leftrightarrow (6)$ follows directly from \cite[Proposition 3.1]{Kim-13}.
%In fact, as a general result, (5) implies (6) is a consequence of Banach-Dieudonn\'{e} theorem.
\end{proof}

Next, we identify $\H^\infty_\K(U,F)$ with the subspace of $\L((F^*,w^*);(\G^\infty(U),w^*))$ consisting of all bounded-weak*-to-norm continuous linear operators from $F^*$ into $\H^\infty(U)$.

\begin{proposition}\label{cor-4-1}
Let $U$ be an open subset of a complex Banach space $E$ and let $F$ be a complex Banach space. The mapping $f\mapsto f^t$ is an isometric isomorphism from $\H_\K^\infty(U,F)$ onto $\L((F^*,bw^*);\H^\infty(U))$.
\end{proposition}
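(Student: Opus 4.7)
The plan is to deduce this from Proposition~\ref{teo-4-1}, which provides the isometric isomorphism $f\mapsto f^t$ between $\H^\infty(U,F)$ and $\L((F^*,w^*);(\H^\infty(U),w^*))$, combined with the equivalence (1)$\Leftrightarrow$(4) of Theorem~\ref{teo-4-2}, which characterises mappings with relatively compact range as exactly those whose transpose is $bw^*$-to-norm continuous. First I would check that $f\mapsto f^t$ is well defined, linear, and isometric from $\H_\K^\infty(U,F)$ into $\L((F^*,bw^*);\H^\infty(U))$: well-definedness is Theorem~\ref{teo-4-2} (1)$\Rightarrow$(4); linearity is immediate from $f^t(\varphi)=\varphi\circ f$; the equality $\|f^t\|=\|f\|_\infty$ is Proposition~\ref{prop-A}; injectivity then follows from the isometry.

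For surjectivity, starting from $T\in\L((F^*,bw^*);\H^\infty(U))$, the strategy is to show that $T$ also belongs to $\L((F^*,w^*);(\H^\infty(U),w^*))$, so that Proposition~\ref{teo-4-1} produces a unique $f\in\H^\infty(U,F)$ with $f^t=T$; then Theorem~\ref{teo-4-2} (4)$\Rightarrow$(1) forces $f\in\H_\K^\infty(U,F)$, and the isometry automatically matches the norms.

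The main obstacle is the upgrade from $bw^*$-to-norm continuity of $T$ to global $w^*$-to-$w^*$ continuity. Since norm convergence in $\H^\infty(U)$ implies $w^*$-convergence (using the identification $\H^\infty(U)\cong\G^\infty(U)^*$ from Theorem~\ref{teo1}~(4)) and the $bw^*$ and $w^*$ topologies coincide on every norm-bounded subset of $F^*$, the restriction $T|_{B_{F^*}}$ is $w^*$-to-$w^*$ continuous; moreover, being $w^*$-to-norm continuous on the $w^*$-compact ball $B_{F^*}$, its image $T(B_{F^*})$ is norm-compact, and in particular $T$ is norm-bounded. The Krein--Smulian/Banach--Dieudonn\'e theorem, invoked as in \cite[Theorem 3.1.11]{Meg-98}, then promotes $w^*$-to-$w^*$ continuity on $B_{F^*}$ of the bounded operator $T$ to global $w^*$-to-$w^*$ continuity, completing the argument.
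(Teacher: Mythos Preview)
Your proof is correct and follows essentially the same route as the paper: the paper also reduces surjectivity to upgrading $bw^*$-to-norm continuity of $T$ to $w^*$-to-$w^*$ continuity via the Banach--Dieudonn\'e/Krein--Smulian principle (cited there as \cite[Theorem 2.7.8]{Meg-98} applied to each coordinate functional), then invokes \cite[Theorem 3.1.11]{Meg-98} to realise $J_U\circ T$ as an adjoint and \cite[Theorem 3.4.16]{Meg-98} to obtain compactness. Your version is slightly more streamlined because you quote Proposition~\ref{teo-4-1} and Theorem~\ref{teo-4-2} directly instead of unfolding them; note only that the precise reference for the $bw^*$-to-$w^*$ upgrade of linear functionals is \cite[Theorem 2.7.8]{Meg-98} rather than \cite[Theorem 3.1.11]{Meg-98}.
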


\begin{proof}
Let $f\in\H_\K^\infty(U,F)$. Then $f^t\in\L((F^*,bw^*);\H^\infty(U))$ by Theorem \ref{teo-4-2} and $||f^t||=\left\|f\right\|_\infty$ by Proposition \ref{prop-A}. To prove the surjectivity, take $T\in\L((F^*,bw^*);\H^\infty(U))$. Then $J_U\circ T\in\L((F^*,bw^*);\G^\infty(U)^*)$. If $Q_{\G^\infty(U)}$ denotes the natural injection from $\G^\infty(U)$ into $\G^\infty(U)^{**}$, then $Q_{\G^\infty(U)}(\gamma)\circ J_U\circ T\in\L((F^*,bw^*);\C)$ for all $\gamma\in\G^\infty(U)$ and, by \cite[Theorem 2.7.8]{Meg-98}, $Q_{\G^\infty(U)}(\gamma)\circ J_U\circ T\in\L((F^*,w^*);\C)$ for all $\gamma\in\G^\infty(U)$, that is, $J_U\circ T\in\L((F^*,w^*);(\G^\infty(U)^*,w^*))$ by \cite[Corollary 2.4.5]{Meg-98}. Hence $J_U\circ T=S^*$ for some $S\in\L(\G^\infty(U);F)$ by \cite[Theorem 3.1.11]{Meg-98}. Note that $S^*\in\L((F^*,bw^*);\G^\infty(U)^*)$ and this means that $S\in\K(\G^\infty(U),F)$ by \cite[Theorem 3.4.16]{Meg-98}. Now, $S=T_f$ for some $f\in\H^\infty_\K(U,F)$ by Theorem \ref{teo1} (3) and Theorem \ref{teo-4-2}. Finally, we have $T=J^{-1}_U\circ S^*=J^{-1}_U\circ (T_f)^*=f^t$.
%In another form: Hence $T\in\L((F^*,w^*);(\H^\infty(U),w^*))$. On a hand, $T=f^t$ for some $f\in\H^\infty(U,F)$ by Proposition \ref{teo-4-1}, and, on the other hand, $J^{-1}_U\circ T=S^*$ for some $S\in\L(\G^\infty(U);F)$ by \cite[Theorem 3.1.11]{Meg-98}. Note that $S^*\in\L((F^*,bw^*);\G^\infty(U)^*)$ and this means that $S\in\K(\G^\infty(U),F)$ by \cite[Theorem 3.4.16]{Meg-98}. Then $S^*\in\K(F^*,\G^\infty(U)^*)$ by Schauder's theorem, hence $f^t=T=J^{-1}_U\circ S^*\in\K(F^*,\H^\infty(U))$ and thus $f\in\H_\K^\infty(U,F)$ by Theorem \ref{teo-4-2}.
\end{proof}

\subsection{Bounded holomorphic mappings with separable range}

We denote by $\H^\infty_{\S}(U,F)$ the space of all mappings $f\in\H^\infty(U,F)$ such that $f(U)$ is separable. Clearly, $\H^\infty_{\K}(U,F)$ is contained in $\H^\infty_{\S}(U,F)$.

\begin{theorem}\label{teo-3-4-1}
Let $U$ be an open subset of a complex Banach space $E$, let $F$ be a complex Banach space and let $f\in\H^\infty(U,F)$. Consider the following assertions:
\begin{enumerate}
	\item $f\colon U\to F$ has separable range.
	\item $T_f\colon\G^\infty(U)\to F$ has separable range.
	\item $(T_f)^*\colon F^*\to\G^\infty(U)^*$ has separable range.
	\item $f^t\colon F^*\to\H^\infty(U)$ has separable range.
\end{enumerate}
Then $(1)\Leftrightarrow(2)\Leftarrow(3)\Leftrightarrow(4)$.
\end{theorem}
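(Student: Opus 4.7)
The plan is to treat the chain $(1) \Leftrightarrow (2) \Leftarrow (3) \Leftrightarrow (4)$ in three pieces; the implication $(3) \Rightarrow (2)$ is the substantive step and the main obstacle.

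For $(1) \Leftrightarrow (2)$, I would combine $f(U) = T_f(\delta_U(U))$ from Theorem \ref{teo1}(2) with $\G^\infty(U) = \overline{\lin}(\delta_U(U))$. By continuity and linearity of $T_f$,
\[
f(U) \subseteq T_f(\G^\infty(U)) \subseteq \overline{T_f(\lin(\delta_U(U)))} = \overline{\lin(f(U))},
\]
so $f(U)$ and $T_f(\G^\infty(U))$ share the same closed linear hull in $F$, and separability passes freely between them. The equivalence $(3) \Leftrightarrow (4)$ is immediate from Proposition \ref{prop-A}: $f^t = J_U^{-1} \circ (T_f)^*$ and $J_U^{-1}$ is an isometric isomorphism, hence carries the set $(T_f)^*(F^*)$ bijectively and isometrically onto $f^t(F^*)$, preserving separability in both directions.

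The main obstacle is $(3) \Rightarrow (2)$. My plan is to invoke the classical Banach-space fact that if $T \in \L(X,Y)$ has norm-separable adjoint range $T^*(Y^*) \subseteq X^*$, then $T$ itself has norm-separable range in $Y$. Applied to $T_f \in \L(\G^\infty(U); F)$, this yields $(3) \Rightarrow (2)$ and, via $(1) \Leftrightarrow (2)$, $(3) \Rightarrow (1)$. A standard argument for the classical fact takes $\{(T_f)^* \varphi_n\}_n$ norm-dense in $(T_f)^*(F^*)$ with $\|\varphi_n\| \le 1$; a routine approximation shows that $\{\varphi_n\}_n$ is norming on $T_f(\G^\infty(U))$, producing an isometric embedding $w \mapsto (\varphi_n(w))_n$ of $T_f(\G^\infty(U))$ into $\ell^\infty$, and one then factors $T_f$ through the separable subspace $\overline{(T_f)^*(F^*)}$ of $\G^\infty(U)^*$ to place its range inside a separable subspace of $F$. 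The reverse implication $(2) \Rightarrow (3)$ is not claimed because it fails in general (e.g.\ $\mathrm{Id}\colon \ell^1 \to \ell^1$ has separable range, but its adjoint $\mathrm{Id}\colon \ell^\infty \to \ell^\infty$ does not), which explains why the arrow in the theorem runs in only one direction.
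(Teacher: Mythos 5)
Your decomposition coincides with the paper's. The equivalence $(1)\Leftrightarrow(2)$ is proved there with exactly the inclusions $f(U)\subseteq T_f(\G^\infty(U))\subseteq\overline{\lin}(f(U))$ that you write down, and $(3)\Leftrightarrow(4)$ is handled, as you do, by observing that composing with the isometric isomorphism $J_U^{-1}$ cannot affect separability of the range (the paper phrases this as the operator-ideal property of $\S$). For the substantive step $(3)\Rightarrow(2)$ the paper does nothing more than cite the classical fact you invoke --- an operator whose adjoint has separable range itself has separable range, i.e.\ $\S^{dual}\subseteq\S$, which is \cite[4.4.8]{Pie-80} --- so your overall route is the intended one, and your closing remark that $(2)\not\Rightarrow(3)$ (via $\mathrm{Id}\colon\ell_1\to\ell_1$) correctly explains the one-directional arrow.

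The gap is in your sketched justification of that classical fact: as written, it would not go through. Producing a norming sequence $(\varphi_n)\subseteq B_{F^*}$ and hence an isometric embedding $w\mapsto(\varphi_n(w))_n$ of $T_f(\G^\infty(U))$ into $\ell_\infty$ proves nothing, because $\ell_\infty$ is non-separable and contains isometric copies of non-separable spaces; an embedding into $\ell_\infty$ is never evidence of separability. Likewise, ``factoring through $R:=\overline{(T_f)^*(F^*)}$'' presumably means writing $(T_f)^*=\iota_R\circ S$ and hence $(T_f)^{**}=S^*\circ\iota_R^*$, which places the range of $(T_f)^{**}$ (hence of $T_f$) inside $S^*(R^*)$; but $R$ separable does not make $R^*$ separable --- this is precisely the phenomenon behind your own $\ell_1$/$\ell_\infty$ example --- so this factorization also fails to localize the range in a separable subspace. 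A correct elementary argument runs as follows: let $Y=\overline{T_f(\G^\infty(U))}$ and let $T_0\colon\G^\infty(U)\to Y$ be the corestriction; since the restriction map $F^*\to Y^*$ is onto (Hahn--Banach), $T_0^*(B_{Y^*})\subseteq (T_f)^*(F^*)$ is norm-separable, and $T_0^*$ is injective because $T_0$ has dense range. Thus $T_0^*$ is a weak*-homeomorphism of $(B_{Y^*},w^*)$ onto the weak*-compact, norm-separable set $K=T_0^*(B_{Y^*})$; a bounded, norm-separable, weak*-compact set is weak*-metrizable (a countable subset of $\G^\infty(U)$ suffices to separate the points of $K$), so $(B_{Y^*},w^*)$ is metrizable, which forces $Y$, and hence $T_f(\G^\infty(U))$, to be separable. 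Alternatively, simply cite \cite[4.4.8]{Pie-80}, as the paper does.
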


\begin{proof}
$(1)\Leftrightarrow(2)$: We next use some inclusions obtained in the proof of Theorem \ref{teo-3-0}. If $f(U)$ is separable, then so is $\overline{\lin}(f(U))$, and since $T_f(\G^\infty(U))\subseteq\overline{\lin}(f(U))$, we deduce that $T_f(\G^\infty(U))$ is separable. Conversely, if $T_f(\G^\infty(U))$ is separable, then so is $f(U)$ since $f(U)\subseteq\lin(f(U))\subseteq T_f(\G^\infty(U))$.

$(3)\Leftrightarrow(4)$: We apply that the space $\S(E,F)$ of all bounded linear operators between Banach spaces having separable range is an operator ideal (see \cite[1.8.2]{Pie-80}).

$(3)\Rightarrow(2)$: We apply that if $T\in\L(E;F)$ and $T^*\in\L(F^*;E^*)$, then $T\in\S(E,F)$ (see \cite[4.4.8]{Pie-80}).
\end{proof}

\subsection{Approximable bounded holomorphic mappings}

We now enlarge the set of bounded finite-rank holomorphic mappings as follows. We say that a mapping $f\in\H^\infty(U,F)$ is \textit{approximable} if it is the limit in the uniform norm of a sequence of mappings of $\H_\F^\infty(U,F)$. The set of such mappings will be denoted by $\H^\infty_{\overline{\F}}(U,F)$.

Next, we see that every approximable mapping $f\in\H^\infty(U,F)$ has relatively compact range.
%every bounded holomorphic mapping which can be approximated by bounded finite-rank holomorphic mappings with respect to the supremum norm has a relatively compact range.

\begin{proposition}
Let $U$ be an open subset of a complex Banach space $E$ and let $F$ be a complex Banach space. Then $\H^\infty_{\overline{\F}}(U,F)\subseteq\H_\K^\infty(U,F)$.
\end{proposition}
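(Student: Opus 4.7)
The plan is to push the approximation statement through Mujica's linearization and then invoke the already-proved equivalences, so that the problem reduces to the classical fact that approximable linear operators are compact.

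First I would take $f\in\H^\infty_{\overline{\F}}(U,F)$ and pick a sequence $(f_n)\subseteq\H^\infty_{\F}(U,F)$ with $\left\|f_n-f\right\|_\infty\to 0$. The key observation is Theorem \ref{teo1} (3): the correspondence $g\mapsto T_g$ is an isometric isomorphism from $\H^\infty(U,F)$ onto $\L(\G^\infty(U);F)$. In particular it is linear, so $T_f-T_{f_n}=T_{f-f_n}$ and
\[
\left\|T_f-T_{f_n}\right\|=\left\|f-f_n\right\|_\infty\longrightarrow 0.
\]
Thus $T_{f_n}\to T_f$ in the operator norm.

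Next, by Theorem \ref{teo-3-0}, each $T_{f_n}$ has finite rank, hence $T_{f_n}\in\K(\G^\infty(U);F)$. Since $\K(\G^\infty(U);F)$ is a norm-closed subspace of $\L(\G^\infty(U);F)$ (a standard fact about compact operators between Banach spaces), the norm limit $T_f$ belongs to $\K(\G^\infty(U);F)$. Finally, the equivalence $(1)\Leftrightarrow(2)$ of Theorem \ref{teo-4-2} transfers the conclusion back to $f$, yielding $f\in\H^\infty_\K(U,F)$.

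I do not anticipate a genuine obstacle: once Mujica's linearization is invoked, the result is immediate from the closedness of $\K(\G^\infty(U);F)$ in $\L(\G^\infty(U);F)$. The only point that deserves a line of justification is the linearity of $g\mapsto T_g$ used to identify $T_f-T_{f_n}$ with $T_{f-f_n}$; everything else is a direct appeal to Theorems \ref{teo1}, \ref{teo-3-0}, and \ref{teo-4-2}.
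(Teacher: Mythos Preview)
Your proof is correct and follows essentially the same route as the paper: pass to the linearizations via the isometry of Theorem \ref{teo1} (3), use Theorem \ref{teo-3-0} to get finite-rank (hence compact) operators, invoke the norm-closedness of $\K(\G^\infty(U);F)$, and transfer back with Theorem \ref{teo-4-2}. The paper's argument is identical step for step, citing \cite[Corollary 3.4.9]{Meg-98} for the closedness of the compact operators.
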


\begin{proof}
Let $f\in\H^\infty_{\overline{\F}}(U,F)$. Hence there is a sequence $(f_n)_{n\in\N}$ in $\H_\F^\infty(U,F)$ such that $\left\|f_n-f\right\|_\infty\to 0$ as $n\to\infty$. Since $T_{f_n}\in\F(G^\infty(U),F)$ by Theorem \ref{teo-3-0}, $\F(G^\infty(U),F)\subseteq\K(\G^\infty(U),F)$ and $\left\|T_{f_n}-T_f\right\|=\left\|T_{f_n-f}\right\|=\left\|f_n-f\right\|_\infty$ for all $n\in\N$, we deduce that $T_f\in\K(\G^\infty(U),F)$ by \cite[Corollary 3.4.9]{Meg-98}, and so $f\in\H_\K^\infty(U,F)$ by Theorem \ref{teo-4-2}.
\end{proof}

An application of the principle of local reflexivity obtained by C. V. Hutton \cite{Hut-74} shows that an operator $T\in\L(E;F)$ can be approximated by bounded finite-rank linear operators from $E$ into $F$ if and only if $T^*$ can be approximated by bounded finite-rank linear operators from $F^*$ into $E^*$. We now invoke Hutton's theorem to obtain the following.

\begin{theorem}\label{cor-4-0}
Let $U$ be an open subset of a complex Banach space $E$, let $F$ be a complex Banach space and $f\in\H^\infty(U,F)$. The following are equivalent:
\begin{enumerate}
	\item $f\colon U\to F$ can be approximated by bounded finite-rank holomorphic mappings.
	\item $T_f\colon\G^\infty(U)\to F$ can be approximated by bounded finite-rank linear operators.
	\item $f^t\colon F^*\to\H^\infty(U)$ can be approximated by bounded finite-rank linear operators.
\end{enumerate}
\end{theorem}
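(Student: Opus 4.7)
The plan is to assemble three ingredients already at our disposal: the isometric isomorphism $f\mapsto T_f$ from Theorem \ref{teo1} (3), the finite-rank transfer given by Theorem \ref{teo-3-0}, and Hutton's theorem stating that an operator between Banach spaces is approximable if and only if its adjoint is approximable. The equivalences will follow essentially by pushing approximating sequences through the relevant isometric identifications, with Hutton's theorem used exactly once to cross from a primal operator to its adjoint.

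For $(1)\Leftrightarrow(2)$, I would use that $J_{U,F}\colon\H^\infty(U,F)\to\L(\G^\infty(U);F)$ is an isometric isomorphism with $J_{U,F}^{-1}(T)=T\circ\delta_U$. Thus a sequence $(f_n)$ in $\H^\infty(U,F)$ satisfies $\|f_n-f\|_\infty\to 0$ if and only if $\|T_{f_n}-T_f\|\to 0$; and by Theorem \ref{teo-3-0}, $f_n\in\H_\F^\infty(U,F)$ if and only if $T_{f_n}\in\F(\G^\infty(U),F)$. Assuming (1), the sequence $(T_{f_n})$ witnesses (2); conversely, any approximating sequence $(S_n)\subseteq\F(\G^\infty(U),F)$ for $T_f$ pulls back via $J_{U,F}^{-1}$ to a sequence $(g_n)=(S_n\circ\delta_U)$ in $\H_\F^\infty(U,F)$ converging to $f$ in the uniform norm.

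For $(2)\Leftrightarrow(3)$, I would invoke Proposition \ref{prop-A}, which says $f^t=J_U^{-1}\circ(T_f)^*$ with $J_U^{-1}\colon\G^\infty(U)^*\to\H^\infty(U)$ an isometric isomorphism. By Theorem \ref{teo-3-0}, this identification preserves the class of finite-rank operators, so $(T_f)^*$ is approximable in $\L(F^*;\G^\infty(U)^*)$ if and only if $f^t$ is approximable in $\L(F^*;\H^\infty(U))$. Then Hutton's theorem supplies the bridge $T_f$ approximable $\Leftrightarrow (T_f)^*$ approximable. Chaining these two equivalences yields $(2)\Leftrightarrow(3)$.

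The argument is largely bookkeeping rather than deep; the only non-elementary ingredient is Hutton's theorem (which rests on the principle of local reflexivity). The main point requiring care is to make sure the isometric isomorphisms $J_{U,F}$ and $J_U$ simultaneously transport both the uniform convergence of a sequence and the finite-rank property of each term, so that the approximating sequences one constructs on either side are built from the right class of operators. Once this is verified via Theorem \ref{teo-3-0} and the isometry statements, the three conditions chain together without further difficulty.
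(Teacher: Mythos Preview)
Your proposal is correct and follows essentially the same approach as the paper: the equivalence $(1)\Leftrightarrow(2)$ comes from the isometric isomorphism $J_{U,F}$ together with Theorem~\ref{teo-3-0}, and $(2)\Leftrightarrow(3)$ comes from Hutton's theorem combined with the identification $f^t=J_U^{-1}\circ(T_f)^*$ from Proposition~\ref{prop-A}. The only cosmetic difference is that the paper packages the last step via the operator ideal property of $\overline{\F}$, whereas you argue directly with approximating sequences; the content is the same.
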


\begin{proof}
We have
\begin{align*}
f\in\H^\infty_{\overline{\F}}(U,F)
&\Leftrightarrow T_f\in\overline{\F}(\G^\infty(U),F)\\
&\Leftrightarrow (T_f)^*\in\overline{\F}(F^*,\G^\infty(U)^*)\\
&\Leftrightarrow f^t=J^{-1}_U\circ(T_f)^*\in\overline{\F}(F^*,\H^\infty(U))
\end{align*}
by Theorem \ref{teo1} (3) and Theorem \ref{teo-3-0} for the first equivalence, Hutton’s theorem \cite[Theorem 2.1]{Hut-74} for the second one, and the operator ideal property of $\overline{\F}(E,F)$ for the third one \cite[4.2.2]{Pie-80}.
\end{proof}

\subsection{Holomorphic mappings with relatively weakly compact range}

The Davis--Figiel--Johnson--Pe\l czynski factorization theorem \cite{dfjp} asserts that any weakly compact linear operator between Banach spaces factors through a reflexive Banach space. We now extend this result to holomorphic mappings with relatively weakly compact range and give also the analogs of Gantmacher and Gantmacher--Nakamura theorems for such mappings.

We will denote by $\H^\infty_{\W}(U,F)$ the linear space of all holomorphic mappings from $U$ to $F$ that have relatively weakly compact range. Clearly, $\H^\infty_{\K}(U,F)$ is contained in $\H^\infty_{\W}(U,F)$.

\begin{theorem}\label{teo-3-2}
Let $U$ be an open subset of a complex Banach space $E$, let $F$ be a complex Banach space and let $f\in\H^\infty(U,F)$. The following are equivalent:
\begin{enumerate}
	\item $f\colon U\to F$  has relatively weakly compact range.
	\item $T_f\colon\G^\infty(U)\to F$ is weakly compact.
	\item There exist a reflexive complex Banach space $G$, an operator $T\in\L(G;F)$ and a mapping $g\in\H^\infty(U,G)$ such that $f=T\circ g$.
	\item $f^t\colon F^*\to\H^\infty(U)$ is weakly compact.
	\item $f^t\colon F^*\to\H^\infty(U)$ is weak*-to-weak continuous.
\end{enumerate}
\end{theorem}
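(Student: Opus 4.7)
The plan is to adapt the strategy of Theorem~\ref{teo-4-2}, replacing Mazur's compactness theorem by Krein's theorem on the weak compactness of the closed absolutely convex hull of a weakly compact set, and to combine this with Gantmacher's theorem (in both its adjoint-weak-compactness and its weak$^*$-to-weak continuity forms) and the Davis--Figiel--Johnson--Pe\l czy\'nski factorization theorem to close the loop among the five conditions. I will organize the argument as $(1)\Leftrightarrow(2)$, then $(2)\Leftrightarrow(3)$, and finally $(2)\Leftrightarrow(4)\Leftrightarrow(5)$.

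For $(1)\Leftrightarrow(2)$, I would exploit the identities used in the compact case: $f(U)=T_f(\delta_U(U))$ by Theorem~\ref{teo1}(2), $\delta_U(U)\subseteq B_{\G^\infty(U)}$, and $B_{\G^\infty(U)}=\overline{\aco}(\delta_U(U))$ by Theorem~\ref{teo1}(5). If $T_f$ is weakly compact then $f(U)=T_f(\delta_U(U))\subseteq T_f(B_{\G^\infty(U)})$ is relatively weakly compact. Conversely, if $f(U)$ is relatively weakly compact, then by Krein's theorem (see \cite[Theorem 2.8.14]{Meg-98}) $\overline{\aco}(\overline{f(U)}^{w})$ is weakly compact in $F$, and the chain
\begin{equation*}
T_f(B_{\G^\infty(U)})=T_f\bigl(\overline{\aco}(\delta_U(U))\bigr)\subseteq \overline{\aco}\bigl(\overline{T_f(\delta_U(U))}^{w}\bigr)=\overline{\aco}\bigl(\overline{f(U)}^{w}\bigr)
\end{equation*}
yields the weak compactness of $T_f$.

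For $(2)\Leftrightarrow(3)$, the implication $(2)\Rightarrow(3)$ follows by applying the Davis--Figiel--Johnson--Pe\l czy\'nski theorem \cite{dfjp} to the weakly compact operator $T_f\in\L(\G^\infty(U);F)$: there exist a reflexive complex Banach space $G$, operators $S\in\L(\G^\infty(U);G)$ and $T\in\L(G;F)$ with $T_f=T\circ S$. Setting $g:=S\circ\delta_U$, we obtain $g\in\H^\infty(U,G)$ (composition of a bounded linear operator with a bounded holomorphic mapping) and $f=T_f\circ\delta_U=T\circ g$. Conversely, if $f=T\circ g$ as in (3), then $g(U)$ is norm-bounded in the reflexive space $G$, hence relatively weakly compact by the Banach--Alaoglu / Kakutani theorem, and the weak-to-weak continuity of $T$ gives that $f(U)=T(g(U))$ is relatively weakly compact in $F$, that is, (1) holds; by the already established $(1)\Leftrightarrow(2)$ we recover (2).

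For $(2)\Leftrightarrow(4)$, I would invoke Gantmacher's theorem to obtain $T_f\in\W(\G^\infty(U),F)\Leftrightarrow (T_f)^*\in\W(F^*,\G^\infty(U)^*)$, and then use the operator ideal property of $\W$ (together with the isometric isomorphism $J^{-1}_U$) to conclude the equivalence with $f^t=J^{-1}_U\circ(T_f)^*\in\W(F^*,\H^\infty(U))$, exactly as in the proof of Theorem~\ref{teo-3-0}. For $(2)\Leftrightarrow(5)$, I would apply the weak$^*$-to-weak continuity form of Gantmacher's theorem (see \cite[Theorem 3.5.8]{Meg-98}): $T_f$ is weakly compact if and only if $(T_f)^*\colon(F^*,w^*)\to(\G^\infty(U)^*,w)$ is continuous. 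Since $J^{-1}_U$ is a norm isomorphism, it is a homeomorphism $(\G^\infty(U)^*,w)\to(\H^\infty(U),w)$, and so the continuity of $(T_f)^*$ from $(F^*,w^*)$ to $(\G^\infty(U)^*,w)$ is equivalent to the continuity of $f^t=J^{-1}_U\circ(T_f)^*$ from $(F^*,w^*)$ to $(\H^\infty(U),w)$. The main obstacle is the Krein step in $(1)\Rightarrow(2)$: the compact case only required Mazur's theorem, whereas here the passage from weak relative compactness of $f(U)$ to weak relative compactness of $T_f(B_{\G^\infty(U)})$ rests on Krein's deeper result on closed absolutely convex hulls of weakly compact sets.
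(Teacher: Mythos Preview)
Your proof is correct and follows essentially the same route as the paper's own proof: the equivalence $(1)\Leftrightarrow(2)$ via Krein's theorem in place of Mazur's, $(2)\Rightarrow(3)$ via the Davis--Figiel--Johnson--Pe\l czy\'nski factorization with $g:=S\circ\delta_U$, $(3)\Rightarrow(1)$ via reflexivity of $G$ and weak-to-weak continuity of $T$, and the equivalences $(2)\Leftrightarrow(4)$ and $(2)\Leftrightarrow(5)$ via the two forms of Gantmacher's theorem composed with the isometric isomorphism $J_U^{-1}$. The only cosmetic difference is that the paper cites \cite[Theorem 3.5.14]{Meg-98} (Gantmacher--Nakamura) for the weak$^*$-to-weak characterization, whereas you cite \cite[Theorem 3.5.8]{Meg-98}; the content invoked is the same.
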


\begin{proof}
$(1)\Leftrightarrow (2)$ (\cite[Proposition 3.1 (b)]{Muj-91}): It follows with a similar proof to that of the equivalence $(1)\Leftrightarrow (2)$ of Theorem \ref{teo-4-2} by taking into account that the norm closure and weak closure of the convex hull of a subset of a normed space coincide \cite[Corollary 2.5.18]{Meg-98} and that the norm-closed convex hull of a weakly compact subset of a Banach space is itself weakly compact \cite[Theorem 2.8.14]{Meg-98}.

$(2)\Rightarrow(3)$: Applying the Davis--Figiel--Johnson--Pe\l czynski theorem, there exist a reflexive complex Banach space $G$ and operators $T\in\L(G;F)$ and $S\in\L(\G^\infty(U);G)$ such that $T_f=T\circ S$. Taking $g:=S\circ\delta_U\in\H^\infty(U,G)$, we conclude that $f=T_f\circ\delta_U=T\circ S\circ\delta_U=T\circ g$.

$(3)\Rightarrow(1)$: $f(U)=T(g(U))$ is relatively weakly compact because $T$ is weak-to-weak continuous by \cite[Theorem 2.5.11]{Meg-98} and $g(U)$ is relatively weakly compact in $G$ since it is a bounded subset of the reflexive Banach space $G$ (see \cite[Theorem 2.8.2]{Meg-98}).

$(2)\Leftrightarrow (4)$: We have
\begin{align*}
T_f\in\W(\G^\infty(U),F)&\Leftrightarrow (T_f)^*\in\W(F^*,\G^\infty(U)^*)\\
&\Leftrightarrow f^t=J^{-1}_U\circ(T_f)^*\in\W(F^*,\H^\infty(U)),
\end{align*}
by Gantmacher's theorem \cite[Theorem 3.5.13]{Meg-98} and \cite[Proposition 3.5.11]{Meg-98}.

$(2)\Leftrightarrow (5)$: We have
\begin{align*}
T_f\in\W(\G^\infty(U),F)&\Leftrightarrow (T_f)^*\in\L((F^*,w^*);(\G^\infty(U)^*,w))\\
&\Leftrightarrow f^t=J^{-1}_U\circ(T_f)^*\in\L((F^*,w^*);(\H^\infty(U),w))
\end{align*}
by Gantmacher--Nakamura's theorem \cite[Theorem 3.5.14]{Meg-98} and \cite[Corollary 2.5.12]{Meg-98}.
\end{proof}

We next identify $\H^\infty_{\W}(U,F)$ with the subspace of $\L((F^*,w^*);(\G^\infty(U),w^*))$ formed by all weak*-to-weak continuous linear operators from $F^*$ into $\H^\infty(U)$.

\begin{proposition}\label{cor-4-1b}
Let $U$ be an open subset of a complex Banach space $E$ and let $F$ be a complex Banach space. The mapping $f\mapsto f^t$ is an isometric isomorphism from $\H^\infty_{\W}(U,F)$ onto $\L((F^*,w^*);(\H^\infty(U),w))$.
\end{proposition}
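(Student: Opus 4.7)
The plan is to mirror the strategy used in Proposition \ref{cor-4-1}, replacing the bounded-weak* $\to$ norm continuity argument by the weak*-to-weak characterization from Theorem \ref{teo-3-2} (5), and invoking Gantmacher--Nakamura at the key step.

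First I would verify that the map $f\mapsto f^t$ indeed sends $\H^\infty_\W(U,F)$ into $\L((F^*,w^*);(\H^\infty(U),w))$ isometrically. The continuity statement is precisely the equivalence $(1)\Leftrightarrow (5)$ of Theorem \ref{teo-3-2}, and the equality $\|f^t\|=\|f\|_\infty$ is Proposition \ref{prop-A}. Linearity of $f\mapsto f^t$ and injectivity are immediate (and are already contained in Proposition \ref{teo-4-1}).

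The heart of the matter is surjectivity. Fix $T\in\L((F^*,w^*);(\H^\infty(U),w))$. I would first compose with the isometric isomorphism $J_U\colon\H^\infty(U)\to\G^\infty(U)^*$. Since $J_U$ is a bounded linear operator, it is weak-to-weak continuous, so $J_U\circ T\colon (F^*,w^*)\to(\G^\infty(U)^*,w)$ is continuous. Because the weak topology of $\G^\infty(U)^*$ is finer than its weak* topology, the composition is a fortiori weak*-to-weak* continuous. By \cite[Theorem 3.1.11]{Meg-98}, this yields a (unique) operator $S\in\L(\G^\infty(U);F)$ with $S^*=J_U\circ T$. Now comes the key point: $S^*$ is actually weak*-to-weak continuous, hence by the Gantmacher--Nakamura theorem \cite[Theorem 3.5.14]{Meg-98}, $S$ is weakly compact. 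Using Theorem \ref{teo1} (3), write $S=T_f$ for some $f\in\H^\infty(U,F)$; then Theorem \ref{teo-3-2} ((2)$\Leftrightarrow$(1)) forces $f\in\H^\infty_\W(U,F)$. Finally, Proposition \ref{prop-A} gives
\[
T=J_U^{-1}\circ S^*=J_U^{-1}\circ (T_f)^*=f^t,
\]
completing the surjectivity.

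The only subtle step is the ``upgrade'' from ``$J_U\circ T$ is weak*-to-weak* continuous'' (which is what is needed to produce the preadjoint $S$) to ``$S^*$ is weak*-to-weak continuous'' (which is what is needed to conclude weak compactness of $S$). This is not an upgrade at all: both facts come for free from $T$ landing in $(\H^\infty(U),w)$ together with the weak-to-weak continuity of $J_U$; the weak*-to-weak* statement is just the coarser version one uses to invoke \cite[Theorem 3.1.11]{Meg-98}. Once this observation is made, the rest is routine bookkeeping using the identifications from Section \ref{section 1}.
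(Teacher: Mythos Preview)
Your proof is correct and follows essentially the same route as the paper's own argument: after noting that the isometry and well-definedness come from Theorem \ref{teo-3-2} and Proposition \ref{prop-A}, both you and the paper establish surjectivity by composing with $J_U$, observing that $J_U\circ T$ is weak*-to-weak (hence also weak*-to-weak*) continuous, producing the preadjoint $S$ via \cite[Theorem 3.1.11]{Meg-98}, and then invoking Gantmacher--Nakamura to deduce $S\in\W(\G^\infty(U),F)$ before identifying $S=T_f$ with $f\in\H^\infty_\W(U,F)$. Your closing remark about the ``upgrade'' being no upgrade at all is exactly the point the paper makes implicitly.
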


\begin{proof}
In view of Theorem \ref{teo-3-2} and Proposition \ref{prop-A}, we only need to show that the mapping in the statement is surjective. Let $T\in\L((F^*,w^*);(\H^\infty(U),w))$. Then $J_U\circ T\in\L((F^*,w^*);(\G^\infty(U)^*,w))$ by \cite[Theorem 2.5.11]{Meg-98}, and this last set is contained in $\L((F^*,w^*);(\G^\infty(U)^*,w^*))$ since the weak topology is stronger than the weak* topology on the dual of a normed space. % since $w^*\subseteq w$ on $\G^\infty(U)^*$.
It follows that $J_U\circ T=S^*$ for some $S\in\L(\G^\infty(U);F)$ by \cite[Theorem 3.1.11]{Meg-98}. Hence $S^*\in\L((F^*,w^*);(\G^\infty(U)^*,w))$ and, by Gantmacher--Nakamura's theorem \cite[Theorem 3.5.14]{Meg-98}, $S\in\W(\G^\infty(U),F)$. Now, $S=T_f$ for some $f\in\H_{\W}^\infty(U,F)$ by Theorem \ref{teo1} (3) and Theorem \ref{teo-3-2}. Finally, $T=J^{-1}_U\circ S^*=J^{-1}_U\circ (T_f)^*=f^t$, as desired.
\end{proof}

\subsection{Bounded holomorphic mappings with Rosenthal range}

Given Banach spaces $E$ and $F$, a set $A\subseteq E$ is called \textit{Rosenthal (or conditionally weakly compact)} if every sequence in $A$ admits a weak Cauchy subsequence, and a bounded linear operator $T\colon E\to F$ is called a \textit{Rosenthal operator} if $T(B_E)$ is a Rosenthal subset of $F$. It is known (see, for example, \cite{Alv-88}) that $T\colon E\to F$ is Rosenthal if and only if it factors through a Banach space that does not contain an isomorphic copy of $\ell_1$.

We now prove a similar result for holomorphic mappings with Rosenthal range, and the Rosenthal factorization theorem then allows us to factor these mappings through a Banach space not containing an isomorphic copy of $\ell_1$. We denote by $\H^\infty_{\R}(U,F)$ the linear space of all bounded holomorphic mappings from $U$ to $F$ that have Rosenthal range. Clearly, $\H^\infty_{\W}(U,F)$ is contained in $\H^\infty_{\R}(U,F)$.

\begin{theorem}\label{teo-3-3}
Let $U$ be an open subset of a complex Banach space $E$, let $F$ be a complex Banach space and let $f\in\H^\infty(U,F)$. The following are equivalent:
\begin{enumerate}
	\item $f\colon U\to F$ has Rosenthal range.
	\item $T_f\colon\G^\infty(U)\to F$ is Rosenthal.
	\item There exist a complex Banach space $G$ which does not contain an isomorphic copy of $\ell_1$, an operator $T\in\L(G;F)$ and a mapping $g\in\H^\infty(U,G)$ such that $f=T\circ g$.
\end{enumerate}
\end{theorem}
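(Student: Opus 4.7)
The plan is to follow the same three-step template used for the weakly compact case in Theorem~\ref{teo-3-2}, replacing weak compactness everywhere by the Rosenthal property and invoking Rosenthal's $\ell_1$ theorem and the Rosenthal factorization theorem in place of Eberlein--\v{S}mulian and Davis--Figiel--Johnson--Pe\l czynski.

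For $(2) \Rightarrow (1)$: Since $\delta_U(U)\subseteq B_{\G^\infty(U)}$ and $f(U)=T_f(\delta_U(U))$ by Theorem~\ref{teo1}(2), if $T_f(B_{\G^\infty(U)})$ is Rosenthal in $F$ then its subset $f(U)$ is Rosenthal (being Rosenthal is hereditary: every sequence in a subset is also a sequence in the larger set). For $(1) \Rightarrow (2)$: by Theorem~\ref{teo1}(5), $B_{\G^\infty(U)}=\overline{\aco}(\delta_U(U))$, hence
\[
T_f(B_{\G^\infty(U)})\subseteq \overline{\aco}(T_f(\delta_U(U)))=\overline{\aco}(f(U)).
\]
The key ingredient is that the norm-closed absolutely convex hull of a Rosenthal (i.e.\ conditionally weakly compact) subset of a Banach space is again Rosenthal; this is the analog for Rosenthal sets of Krein's theorem, and follows from Rosenthal's $\ell_1$ theorem. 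This gives that $T_f(B_{\G^\infty(U)})$ is Rosenthal, i.e.\ $T_f\in\R(\G^\infty(U),F)$.

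For $(2)\Rightarrow(3)$: apply the Rosenthal factorization theorem (see \cite{Alv-88}) to the Rosenthal operator $T_f$, obtaining a complex Banach space $G$ not containing an isomorphic copy of $\ell_1$ and operators $T\in\L(G;F)$ and $S\in\L(\G^\infty(U);G)$ with $T_f=T\circ S$. Setting $g:=S\circ\delta_U\in\H^\infty(U,G)$, we conclude $f=T_f\circ\delta_U=T\circ S\circ\delta_U=T\circ g$. For $(3)\Rightarrow(1)$: $g(U)$ is bounded in $G$, and since $G$ does not contain $\ell_1$, Rosenthal's $\ell_1$ theorem yields that every sequence in $g(U)$ has a weakly Cauchy subsequence, so $g(U)$ is Rosenthal in $G$. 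The operator $T$ is weak-to-weak continuous by \cite[Theorem~2.5.11]{Meg-98}, hence it sends weakly Cauchy sequences to weakly Cauchy sequences, so $f(U)=T(g(U))$ is Rosenthal in $F$.

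The main obstacle is the implication $(1)\Rightarrow(2)$, because unlike the compact or weakly compact situations, stability of Rosenthal sets under the closed absolutely convex hull operation is not elementary; one must cite this as a consequence of Rosenthal's dichotomy rather than prove it on the spot. Once that stability is available, the rest of the argument is a straightforward transcription of the proof of Theorem~\ref{teo-3-2}, with the Rosenthal factorization theorem taking the place of the Davis--Figiel--Johnson--Pe\l czynski theorem and with the $\ell_1$-free characterization of Rosenthal operators supplying the easy direction $(3)\Rightarrow(1)$.
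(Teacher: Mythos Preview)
Your proposal is correct and follows essentially the same route as the paper's own proof: the equivalence $(1)\Leftrightarrow(2)$ is obtained exactly as in Theorem~\ref{teo-4-2} using that the norm-closed absolutely convex hull of a Rosenthal set is Rosenthal (the paper cites \cite[p.~357]{Lin-89} for this), and the implications $(2)\Rightarrow(3)$ and $(3)\Rightarrow(1)$ are argued identically via the Rosenthal factorization theorem \cite{Alv-88} and Rosenthal's $\ell_1$-theorem, respectively.
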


\begin{proof}
$(1)\Leftrightarrow(2)$: It can be proved similarly as the same equivalence in Theorem \ref{teo-4-2} taking into account that the norm-closed absolutely convex hull of a Rosenthal subset of a Banach space is itself Rosenthal (see \cite[p. 357]{Lin-89}).

$(2)\Rightarrow(3)$: Rosenthal factorization theorem (see e.g. \cite{Alv-88}) gives a complex Banach space $G$ not containing an isomorphic copy of $\ell_1$ and operators $T\in\L(G;F)$ and $S\in\L(\G^\infty(U);G)$ such that $T_f=T\circ S$. If we write $g:=S\circ\delta_U\in\H^\infty(U,G)$, then $f=T_f\circ\delta_U=T\circ S\circ\delta_U=T\circ g$.

$(3)\Rightarrow(1)$: Notice that $f(U)=T(g(U))$ where $T$ is weak-to-weak continuous \cite[Theorem 2.5.11]{Meg-98} and $g(U)$ is Rosenthal in $G$ by Rosenthal's $\ell_1$-theorem \cite{Ros-74}. %Indeed, notice that $g(U)$ is a bounded subset of a Banach space $G$ not containing an isomorphic copy of $\ell_1$.
\end{proof}

\subsection{Holomorphic mappings with Asplund range}

By \cite[Definition 5.1.2]{Bou-83}, a bounded set $D\subseteq E$ is said to be \textit{Asplund} or to have the  \textit{Asplund property} if every convex continuous function $f\colon E\to\mathbb{R}$ is $D$-differentiable on a residual subset of $E$.
%Notice that Asplund set clearly refers to the Asplund property recalled in \cite{Bou-83}.
A Banach space $E$ is called an \textit{Asplund space} if every convex continuous function $f\colon E\to\R$ is Fr\'echet differentiable on a dense $G_\delta$ set in $E$. This definition is due to E. Asplund \cite{Asp-68} under the name \textit{strong differentiability space}.  We refer to R. D. Bourgin \cite[Theorem 5.2.11]{Bou-83} and R. R. Phelps \cite[Theorem 2.34]{Phe-93} for some equivalent formulations of the concept of Asplund set in a Banach space. In particular, we remark that the Asplund spaces are the Banach spaces for which each separable subspace has a separable dual. We will also use the paper \cite{Ste-81} by C. Stegall for the properties of Asplund sets and Asplund operators in Banach spaces (see also Theorem 5.5.4 in \cite{Bou-83}).

Following \cite[Definition 1.2]{Ste-81}, we say that a closed, bounded, convex subset $K$ of a Banach space $E$ has the \textit{Radon--Nikodym property} (RNP) if for any finite measure space $(\Omega,\Sigma,\mu)$, every $m\colon\Sigma\to E$ that is $\mu$-continuous, countably additive, of finite variation, with average range $\{\mu(E)^{-1}m(E)\colon E\in\Sigma,\, \mu(E)>0\}\subseteq K$, is representable by a Bochner integrable function. A Banach space $E$ has RNP if every closed, bounded and convex subset of $E$ has RNP. By \cite[Theorem 2.8]{Ste-81}, a Banach space $E$ is an Asplund space if and only if $E^*$ has RNP.

Given two Banach spaces $E$ and $F$, we say that an operator $T\in\L(E;F)$ is \textit{Radon--Nikod\'ym} if $T$ factors through a Banach space $W$ with $W$ having RNP. We denote by $\RN(E,F)$ the linear space of all Radon--Nikod\'ym operators from $E$ into $F$. According to \cite[Theorem 2.11]{Ste-81}, we have that $T\in\AS(E,F)$ if and only if $T^*\in\RN(F^*,E^*)$.

We will denote by $\H^\infty_{\AS}(U,F)$ the linear space of all holomorphic mappings from $U$ to $F$ such that $f(U)$ is an Asplund subset of $F$. Clearly, $\H^\infty_{\W}(U,F)$ is contained in $\H^\infty_{\AS}(U,F)$.

\begin{theorem}\label{teo-3-4}
Let $U$ be an open subset of a complex Banach space $E$, let $F$ be a complex Banach space and let $f\in\H^\infty(U,F)$. The following are equivalent:
\begin{enumerate}
	\item $f\colon U\to F$ has Asplund range.
	\item $T_f\colon\G^\infty(U)\to F$ is Asplund.
	\item There exist a complex Asplund space $G$, an operator $T\in\L(G;F)$ and a mapping $g\in\H^\infty(U,G)$ such that $f=T\circ g$.
	\item $f^t\colon F^*\to\H^\infty(U)$ is Radon--Nikod\'ym.
\end{enumerate}
\end{theorem}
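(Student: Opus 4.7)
The plan is to follow the same blueprint as in Theorems \ref{teo-4-2}, \ref{teo-3-2} and \ref{teo-3-3}: use the identities $f(U)=T_f(\delta_U(U))$ and $B_{\G^\infty(U)}=\overline{\aco}(\delta_U(U))$ together with good stability properties of Asplund sets for the equivalence (1)$\Leftrightarrow$(2); invoke Stegall's Asplund factorization theorem for (2)$\Leftrightarrow$(3); and use Stegall's duality $T\in\AS\Leftrightarrow T^*\in\RN$ combined with the operator-ideal property of $\RN$ for (2)$\Leftrightarrow$(4).

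For (1)$\Rightarrow$(2), I would argue that
$$T_f(B_{\G^\infty(U)})=T_f\!\left(\overline{\aco}(\delta_U(U))\right)\subseteq\overline{\aco}\!\left(T_f(\delta_U(U))\right)=\overline{\aco}(f(U)),$$
and then appeal to the two stability properties of Asplund sets contained in \cite[Theorem 5.2.11]{Bou-83} (see also \cite[Theorem 2.34]{Phe-93} and \cite{Ste-81}): continuous linear images of Asplund sets are Asplund, and the norm-closed absolutely convex hull of an Asplund set is again Asplund. Since any subset of an Asplund set is Asplund, this places $T_f(B_{\G^\infty(U)})$ inside an Asplund set, showing $T_f\in\AS(\G^\infty(U),F)$. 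The converse (2)$\Rightarrow$(1) is immediate from $f(U)=T_f(\delta_U(U))\subseteq T_f(B_{\G^\infty(U)})$.

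For (2)$\Rightarrow$(3) I would apply the Asplund factorization theorem \cite[Theorem 5.3.5]{Bou-83} to the Asplund operator $T_f$, obtaining a complex Asplund space $G$ together with $S\in\L(\G^\infty(U);G)$ and $T\in\L(G;F)$ such that $T_f=T\circ S$; then $g:=S\circ\delta_U$ lies in $\H^\infty(U,G)$ and satisfies $f=T_f\circ\delta_U=T\circ g$. For (3)$\Rightarrow$(1), $g(U)$ is a bounded subset of the Asplund space $G$ and is therefore Asplund, and $f(U)=T(g(U))$ is Asplund as the continuous linear image of an Asplund set. For (2)$\Leftrightarrow$(4), Stegall's theorem \cite[Theorem 2.11]{Ste-81} yields
$$T_f\in\AS(\G^\infty(U),F)\Longleftrightarrow (T_f)^*\in\RN(F^*,\G^\infty(U)^*),$$
and, since $\RN$ is an operator ideal and $J^{-1}_U\colon\G^\infty(U)^*\to\H^\infty(U)$ is an isometric isomorphism, this is in turn equivalent to $f^t=J^{-1}_U\circ(T_f)^*\in\RN(F^*,\H^\infty(U))$.

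The only nontrivial step is the converse in (1)$\Leftrightarrow$(2), where I need that the norm-closed absolutely convex hull of an Asplund set remains Asplund; this is the analog of Mazur's compactness theorem used in Theorem \ref{teo-4-2} and of the Krein--Smulian-type statement in Theorem \ref{teo-3-2}, and it is the deepest ingredient of the proof, but it is available in Bourgin's and Stegall's treatments of Asplund sets. All other implications are straightforward transcriptions of the corresponding facts for linear operators.
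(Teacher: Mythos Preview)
Your proposal is correct and follows essentially the same route as the paper's proof: the same inclusion $T_f(B_{\G^\infty(U)})\subseteq\overline{\aco}(f(U))$ together with the stability of Asplund sets under closed absolutely convex hulls for $(1)\Leftrightarrow(2)$, the Asplund factorization theorem for $(2)\Rightarrow(3)$, and Stegall's duality $\AS\leftrightarrow\RN$ plus the operator-ideal property of $\RN$ for $(2)\Leftrightarrow(4)$. The only cosmetic differences are in the precise citations (the paper invokes \cite[Theorem 2.11]{Ste-81} for the factorization rather than \cite[Theorem 5.3.5]{Bou-83}, and \cite[24.2]{Pie-80} for the ideal property of $\RN$), and your remark that ``the converse in $(1)\Leftrightarrow(2)$'' is the nontrivial step is slightly misphrased---it is the direction $(1)\Rightarrow(2)$ that requires the closed-absolutely-convex-hull stability, while $(2)\Rightarrow(1)$ is the trivial inclusion.
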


\begin{proof}
$(1)\Leftrightarrow(2)$: Since $f(U)$ is an Asplund set and $T_f(B_{\G^\infty(U)})\subseteq\overline{\aco}(f(U))$, then $T_f(B_{\G^\infty(U)})$ is an Asplund set by Lemmas 1.3 and 1.4 in \cite{Ste-81} and Theorem 5.5.4 in \cite{Bou-83}.

$(2)\Rightarrow(3)$: If $T_f\in\AS(\G^\infty(U),F)$, then Theorem 2.11 in \cite{Ste-81} assures that $T_f=T\circ S$ with $S\in\L(\G^\infty(U);G)$ and $T\in\L(G;F)$, where $G$ is a complex Asplund space. Taking $g:=S\circ\delta_U\in\H^\infty(U,G)$, we have $f=T_f\circ\delta_U=T\circ S\circ\delta_U=T\circ g$.

$(3)\Rightarrow(1)$: $f(U)=T(g(U))$ is an Asplund set by Theorem 2.8, Theorem 1.13 and Lemma 1.7 in \cite{Ste-81}.

$(2)\Rightarrow(4)$: By \cite[Theorem 2.11]{Ste-81}, \cite[24.2]{Pie-80} and Proposition \ref{prop-A}, we have
\begin{align*}
T_f\in\AS(\G^\infty(U),F)&\Leftrightarrow (T_f)^*\in\RN(F^*,\G^\infty(U)^*)\\
&\Leftrightarrow f^t=J^{-1}_U\circ(T_f)^*\in\RN(F^*,\H^\infty(U)).
\end{align*}
\end{proof}

Combining Theorem \ref{teo1} (3) with the results stated above, we see that the isometric isomorphism $f\mapsto T_f$ from $\H^\infty(U,F)$ onto $\L(\G^\infty(U);F)$ induces the following identifications.

\begin{corollary}\label{cor-messi}
Let $U$ be an open subset of a complex Banach space $E$ and let $F$ be a complex Banach space. The mapping $f\mapsto T_f$ is an isometric isomorphism from $\H^\infty_{\I}(U,F)$ onto $\I(\G^\infty(U),F)$ in the cases $\I=\F,\K,\S,\overline{\F},\W,\R,\AS$. $\hfill\Box$
\end{corollary}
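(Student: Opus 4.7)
The plan is to invoke the master isometric isomorphism of Theorem \ref{teo1}(3) and then, for each of the seven operator ideals in question, match $\H^\infty_\I(U,F)$ with $\I(\G^\infty(U),F)$ using a characterization already established in this section. No new analytic argument is required; the corollary is a collation of preceding results.

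First I would recall that Theorem \ref{teo1}(3) provides an isometric isomorphism $J_{U,F}\colon f\mapsto T_f$ from $\H^\infty(U,F)$ onto $\L(\G^\infty(U);F)$ with $T_f\circ\delta_U=f$ and $\|T_f\|=\|f\|_\infty$. Consequently, the restriction of $J_{U,F}$ to any subset $\H^\infty_\I(U,F)\subseteq\H^\infty(U,F)$ is automatically linear, injective and norm preserving, so the content of the corollary reduces to the single set-theoretic identity
$$
J_{U,F}\bigl(\H^\infty_\I(U,F)\bigr)=\I(\G^\infty(U),F),
$$
i.e.\ to the biconditional $f\in\H^\infty_\I(U,F)\Leftrightarrow T_f\in\I(\G^\infty(U),F)$ for each $\I$ in the list.

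Then for the seven cases I would cite the equivalences $(1)\Leftrightarrow(2)$ obtained earlier: Theorem \ref{teo-3-0} for $\I=\F$, Theorem \ref{teo-4-2} for $\I=\K$, Theorem \ref{teo-3-4-1} for $\I=\S$, Theorem \ref{cor-4-0} for $\I=\overline{\F}$, Theorem \ref{teo-3-2} for $\I=\W$, Theorem \ref{teo-3-3} for $\I=\R$, and Theorem \ref{teo-3-4} for $\I=\AS$. Each of these yields precisely the required biconditional, so combined with the isometric/linear/injective properties of $J_{U,F}$ the proof is complete in that case.

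There is no genuine obstacle here, as the substantive work has been done in the cited theorems; the corollary is essentially a packaging statement. The only point worth a brief comment is that $\H^\infty_\I(U,F)$ has been defined (or is implicitly) a linear subspace of $\H^\infty(U,F)$ for each $\I$ considered, which is consistent with the linear-space structure of $\I(\G^\infty(U),F)$ transported by $J_{U,F}$; this makes the assertion of an \emph{isometric isomorphism of normed linear spaces} meaningful in every case.
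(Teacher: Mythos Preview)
Your proposal is correct and matches the paper's own treatment exactly: the paper does not supply a separate proof but simply notes, in the sentence preceding the corollary, that combining Theorem~\ref{teo1}(3) with the $(1)\Leftrightarrow(2)$ equivalences established in Theorems~\ref{teo-3-0}, \ref{teo-4-2}, \ref{teo-3-4-1}, \ref{cor-4-0}, \ref{teo-3-2}, \ref{teo-3-3} and \ref{teo-3-4} yields the identifications, and marks the statement with a $\Box$. Your explicit case-by-case citation is a faithful expansion of this.
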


\textbf{Acknowledgements.} The authors are grateful to the referees for their helpful comments that have improved this paper.

\end{document}